\def\mf#1{\mathfrak{#1}}
\def\mc#1{\mathcal{#1}}
\def\tx#1{{\rm #1}}
\def\tb#1{\textbf{#1}}
\def\tr{\tx{tr}\,}
\def\R{\mathbb{R}}
\def\C{\mathbb{C}}
\def\Q{\mathbb{Q}}
\def\ol#1{\overline{#1}}
\def\hat{\widehat}
\def\rw{\rightarrow}
\def\lrw{\longrightarrow}
\def\srw{\twoheadrightarrow}
\def\<{\langle}
\def\>{\rangle}
\newenvironment{mytitle}
{\begin{center}\large\sc}
{\end{center}}
\newtheorem{thm}{Theorem}[section]
\newtheorem{lem}[thm]{Lemma}
\newtheorem{pro}[thm]{Proposition}
\newtheorem{cor}[thm]{Corollary}
\newtheorem{fct}[thm]{Fact}
\numberwithin{equation}{section}
\renewcommand{\-}{\hyp{}}
\def\phi{\varphi}
\newlength{\cplxcorr}
\begin{document}

\begin{mytitle} Genericity and contragredience in the local Langlands correspondence \end{mytitle}
\begin{center} Tasho Kaletha \end{center}
\begin{abstract}
We prove the recent conjectures of Adams-Vogan and D. Prasad on the behavior of the local Langlands correspondence with respect to taking the contragredient of a representation. The proof holds for tempered representations of quasi-split real $K$-groups and quasi-split p-adic classical groups (in the sense of Arthur). We also prove a formula for the behavior of the local Langlands correspondence for these groups with respect to changes of the Whittaker data.
\end{abstract}

\section{Introduction}

The local Langlands correspondence is a conjectural relationship between certain representations of the Weil or Weil-Deligne group of a local field $F$ and finite sets, or packets, of representations of a locally compact group arising as the $F$-points of a connected reductive algebraic group defined over $F$. In characteristic zero, this correspondence is known for $F=\R$ and $F=\C$ by work of Langlands \cite{Lan89} and was later generalized and reinterpreted geometrically by Adams, Barbasch, and Vogan \cite{ABV92}. Furthermore, many cases are known when $F$ is a finite extension of the field $\Q_p$ of $p$-adic numbers. Most notably, the correspondence over $p$-adic fields is known when the reductive group is $\tx{GL}_n$ by work of Harris-Taylor \cite{HT01} and Henniart \cite{He00}, and has very recently been obtained for quasi-split symplectic and orthogonal groups by Arthur \cite{Ar11}. Other cases include the group $U_3$ by work of Rogawski, $\tx{Sp}_4$ and $\tx{GSp}_4$ by work of Gan-Takeda. For general connected reductive groups, there are constructions of the correspondence for specific classes of parameters, including the classical case of unramified representations, the case of representations with Iwahori-fixed vector by work of Kazhdan-Lusztig, unipotent representations by work of Lusztig, and more recently regular depth-zero supercuspidal representations by DeBacker-Reeder \cite{DR09}, very cuspidal representations by Reeder \cite{Re08}, and epipelagic representations by the author \cite{Ka12}.

The purpose of this paper is to explore how the tempered local Langlands correspondence behaves with respects to two basic operations on the group. The first operation is that of taking the contragredient of a representation. In a recent paper, Adams and Vogan \cite{AV12} studied this question for the general (not just tempered) local Langlands correspondence for real groups. They provide a conjecture on the level of $L$-packets for any connected reductive group over a local field $F$ and prove this conjecture when $F$ is the field of real numbers. One of our main results is the fact that this conjecture holds for the tempered $L$-packets of symplectic and special orthogonal $p$-adic groups constructed by Arthur. In fact, inspired by the work of Adams and Vogan, we provide a refinement of their conjecture to the level of representations, rather than packets, for the tempered local Langlands correspondence. We prove this refinement when $G$ is either a quasi-split connected real reductive group (more generally, quasi-split real $K$-group), a quasi-split symplectic or special orthogonal $p$-adic group, and in the context of the constructions of \cite{DR09} and \cite{Ka12}. In the real case, the results of Adams and Vogan are a central ingredient in our argument. To obtain our results, we exploit the internal structure of real $L$-packets using recent results of Shelstad \cite{Sh08}. In the case of quasi-split $p$-adic symplectic and special orthogonal groups, we prove a result similar to that of Adams and Vogan using Arthur's characterization of the stable characters of $L$-packets on quasi-split $p$-adic classical groups as twisted transfers of characters of $\tx{GL}_n$. After that, the argument is the same as for the real case. The constructions of \cite{DR09} and \cite{Ka12} are inspected directly.

The second basic operation that we explore is that of changing the Whittaker datum. To explain it, we need some notation. Let $F$ be a local field and $G$ a connected reductive group defined over $F$. Let $W'$ be the Weil group of $F$ if $F=\R$ or the Weil-Deligne group of $F$ if $F$ is an extension of $\Q_p$. Then, if $G$ is quasi-split, it is expected that there is a bijective correspondence $(\phi,\rho) \mapsto \pi$. The target of this correspondence is the set of equivalence classes of irreducible admissible tempered representations. The source of this correspondence is the set of pairs $(\phi,\rho)$ where $\phi : W' \rw {^LG}$ is a tempered Langlands parameter, and $\rho$ is an irreducible representation of the finite group $\pi_0(\tx{Cent}(\phi,\hat G)/Z(\hat G)^\Gamma)$. Here $\hat G$ is the complex (connected) Langlands dual group of $G$, and $^LG$ is the $L$-group of $G$. However, it is known that such a correspondence can in general not be unique. In order to hope for a unique correspondence, following Shahidi \cite[\S9]{Sha90} one must choose a Whittaker datum for $G$, which is a $G(F)$-conjugacy class of pairs $(B,\psi)$ where $B$ is a Borel subgroup of $G$ defined over $F$ and $\psi$ is a generic character of the $F$-points of the unipotent radical of $B$. Then it is expected that there exists a correspondence $(\phi,\rho) \mapsto \pi$ as above which has the property that $\pi$ has a $(B,\psi)$-Whittaker functional precisely when $\rho=1$. Let us denote this conjectural correspondence by $\iota_{B,\psi}$. We are interested in how it varies when one varies the Whittaker datum $(B,\psi)$. We remark that there is a further normalization of $\iota_{B,\psi}$ that must be chosen. As described in \cite[\S 4]{KS12}, it is expected that there will be two normalizations of the local Langlands correspondence for reductive groups, reflecting the two possible normalizations of the local Artin reciprocity map.

The reason we study these two questions together is that they appear to be related. Indeed, when one studies how the pair $(\phi,\rho)$ corresponding to a representation $\pi$ changes when one takes the contragredient of $\pi$, one is led to consider $\iota_{B,\psi}$ for different Whittaker data.

We will now go into more detail and describe our expectation for the behavior of the local Langlands correspondence with respect to taking contragredient and changing the Whittaker datum. We emphasize that we claim no originality for these conjectures. Our formula in the description of the contragredient borrows greatly from the paper of Adams and Vogan, as well as from a conversation with Robert Kottwitz, who suggested taking the contragredient of $\rho$. After the paper was written, we were informed by Dipendra Prasad that an equation closely related to \eqref{eq:expwhit} is stated as a conjecture in \cite[\S9]{GGP12}, and that moreover equation \eqref{eq:expcont} is part of a more general framework of conjectures currently being developed by him under the name ``relative local Langlands correspondence''. We refer the reader to the draft \cite{Pr}.

We continue to assume that $F$ is either real or $p$-adic, and $G$ is a quasi-split connected reductive group over $F$. Fix a Whittaker datum $(B,\psi)$. For any Langlands parameter $\phi : W' \rw {^LG}$, let $S_\phi = \tx{Cent}(\phi,\hat G)$. The basic form of the expected tempered local Langlands correspondence is a bijection $\iota_{B,\psi}$ from the set of pairs $(\phi,\rho)$, where $\phi$ is a tempered Langlands parameter and $\rho$ is an irreducible representation of $\pi_0(S_\phi/Z(\hat G)^\Gamma)$ to the set of equivalence classes of irreducible admissible tempered representations. A refinement of this correspondence is obtained when one allows $\rho$ to be an irreducible representation of $\pi_0(S_\phi)$ rather then its quotient $\pi_0(S_\phi/Z(\hat G)^\Gamma)$. The right-hand side is then the set of equivalence classes of tuples $(G',\xi,u,\pi)$, where $\xi : G \rw G'$ is an inner twist, $u \in Z^1(F,G)$ is an element with the property $\xi^{-1}\sigma(\xi)=\tx{Int}(u(\sigma))$ for all $\sigma \in \Gamma$, and $\pi$ is an irreducible admissible tempered representation of $G'(F)$. The triples $(G',\xi,u)$ are called pure inner twists of $G$, and the purpose of this refined version of the correspondence is to include connected reductive groups which are not quasi-split. The idea of using pure inner forms is due to Vogan \cite{Vo93}, and one can find a formulation of this refinement of the correspondence in \cite{Vo93} or \cite[\S3]{DR09}. A further refinement is obtained by allowing $\rho$ to be an irreducible algebraic representation of the complex algebraic group $\bar S_\phi = S_\phi/[S_\phi \cap \hat G_\tx{der}]^\circ$. The right-hand side then is the set of equivalence classes of tuples $(G',\xi,b,\pi)$, where $\xi : G \rw G'$ is an inner twist and $b$ is a basic
element of $Z^1(W,G(\ol{L}))$, where $L$ is the completion of the maximal unramified extension of $F$, and where $b$ gives rise to $\xi$ as in \cite{Ko97}. This further refinement was introduced by Kottwitz in an attempt to include all connected reductive groups into the correspondence (it is known that not every connected reductive group is a pure inner form of a quasi-split group). Indeed, when the center of $G$ is connected, all inner forms of $G$ come from basic elements of $Z^1(W,G(\ol{L}))$. Moreover, one can reduce the general case to that of connected center. An exposition of this formulation of the correspondence can be found in \cite{Ka11}.

We now let $\iota_{B,\psi}$ denote any version of the above conjectural correspondence, normalized so that $\iota_{B,\psi}(\phi,\rho)$ is $(B,\psi)$-generic precisely when $\rho=1$. The set of Whittaker data for $G$ is a torsor for the abelian group $G_\tx{ad}(F)/G(F)$. Using Langlands' construction of a character on $G(F)$ for each element of $H^1(W,Z(\hat G))$, one obtains from each element of $G_\tx{ad}(F)/G(F)$ a character on the finite abelian group $\tx{ker}(H^1(W,Z(\hat G_\tx{sc})) \rw H^1(W,Z(\hat G)))$. This groups accepts a map from $\pi_0(S_\phi/Z(\hat G)^\Gamma)$ for every Langlands parameter $\phi$. In this way, given a pair of Whittaker data $\mf{w}$ and $\mf{w'}$, the element of $G_\tx{ad}(F)/G(F)$ which conjugates $\mf{w}$ to $\mf{w'}$ provides a character on $\pi_0(S_\phi/Z(\hat G)^\Gamma)$, hence also on $\pi_0(S_\phi)$ and $\bar S_\phi$. We denote this character by $(\mf{w},\mf{w'})$. Then we expect that
\begin{equation} \iota_\mf{w'}(\phi,\rho) = \iota_\mf{w}(\phi,\rho\otimes(\mf{w},\mf{w'})^\epsilon),\label{eq:expwhit}\end{equation}
where $\epsilon=-1$ if $\iota_{\mf{w}}$ and $\iota_\mf{w'}$ are compatible with the classical normalization of the local Artin reciprocity map, and $\epsilon=1$ if $\iota_\mf{w}$ and $\iota_\mf{w'}$ are compatible with Deligne's normalization.

To describe how we expect $\iota_{B,\psi}$ to behave with respect to taking contragredients, we follow \cite{AV12} and consider the Chevalley involution on $\hat G$: As is shown in \cite{AV12}, there exists a canonical element of $\tx{Out}(\hat G)$ which consists of all automorphisms of $\hat G$ that act as inversion on some maximal torus. This canonical element provides a canonical $\hat G$-conjugacy class of $L$-automorphisms of $^LG$ as follows. Fix a $\Gamma$-invariant splitting of $\hat G$ and let $\hat C \in \tx{Aut}(\hat G)$ be the unique lift of the canonical element of $\tx{Out}(\hat G)$ which sends the fixed splitting of $\hat G$ to its opposite. Then $\hat C$ commutes with the action of $\Gamma$, and we put ${^LC}$ to be the automorphism of $\hat G \times W$ given by $\hat C \times \tx{id}$. If we change the splitting of $\hat G$, there exists \cite[Cor. 1.7]{Ko84} an element $g \in \hat G^\Gamma$ which conjugates it to the old splitting. This element also conjugates the two versions of $\hat C$, and hence also the two versions of ${^LC}$. We conclude that that $\hat G$-conjugacy class of ${^LC}$ is indeed canonical. Thus, for any Langlands parameter $\phi : W' \rw {^LG}$, we have a well-defined (up to equivalence) Langlands parameter ${^LC}\circ\phi$. The automorphism $\hat C$ restricts to an isomorphism $S_\phi \rw S_{{^LC}\circ\phi}$ and for each representation $\rho$ of $\bar S_\phi$ we can consider the representation $\rho\circ\hat C^{-1}$ of $\bar S_{^LC\circ\phi}$. When $\phi$ is tempered, we expect
\begin{equation} \iota_{B,\psi}(\phi,\rho)^\vee = \iota_{B,\psi^{-1}}({^LC}\circ\phi,\rho^\vee\circ\hat C^{-1}). \label{eq:expcont}\end{equation}
For this formula it is not important whether $\iota_{B,\psi}$ is normalized with respect to the classical or Deligne's normalization of the local Artin map, as long as $\iota_{B,\psi^{-1}}$ is normalized in the same way.

We will now briefly describe the contents of this paper. In Section \ref{sec:recall}, we recall the fundamental results of Arthur and Shelstad on the endoscopic classification of tempered representations of real and classical $p$-adic groups. In Section \ref{sec:chanwhit} we will describe more precisely the construction of the character $(\mf{w},\mf{w'})$ alluded to in this introduction, and will then prove Equation \eqref{eq:expwhit}. Section \ref{sec:cont} is devoted to the proof of Equation \eqref{eq:expcont} for tempered representations of quasi-split real $K$-groups and quasi-split symplectic and special orthogonal $p$-adic groups. Finally, in Section \ref{sec:explicit} we consider general $p$-adic groups and parameters of zero or minimal positive depth, and prove \eqref{eq:expcont} for those cases as well.

\tb{Acknowledgements:} The author would like to thank Jeff Adams for discussing with him the paper \cite{AV12}, Diana Shelstad for enlightening discussions concerning \cite{KS12} and the normalizations of transfer factors, Robert Kottwitz and Peter Sarnak for their reading of an earlier draft of this paper, Sandeep Varma for pointing out an inaccuracy in an earlier draft, and Dipendra Prasad for pointing out that equations very similar to \eqref{eq:expwhit} and \eqref{eq:expcont} appear as conjectures in \cite{GGP12} and \cite{Pr}.

\tableofcontents

\section{Results of Arthur and Shelstad} \label{sec:recall}
In this section we will recall the results of Arthur and Shelstad on the inversion of endoscopic transfer, which will be an essential ingredient in our proofs. The formulation in the real case is slightly more complicated due to the fact that semi-simple simply-connected real groups can have non-trivial Galois-cohomology, so we will describe the $p$-adic case first.

Let $F$ be a $p$-adic field. Arthur's results apply to groups $G$ which are either the symplectic group, or the split special odd orthogonal group, or the split or quasi-split special even orthogonal groups. The case of even orthogonal groups is slightly more subtle, so let us first assume that $G$ is either symplectic or odd orthogonal. Fix a Whittaker datum $(B,\psi)$. Let $\phi : W' \rw {^LG}$ be a tempered Langlands parameter and put $C_\phi = \pi_0(\tx{Cent}(\phi,\hat G)/Z(\hat G)^\Gamma)$. Arthur's recent results \cite[\S2]{Ar11} imply that there exists an $L$-packet $\Pi_\phi$ of representations of $G(F)$ and a canonical bijection
\[ \iota_{B,\psi} : \tx{Irr}(C_\phi) \rw \Pi_\phi,\quad \rho \mapsto \pi_\rho, \]
which sends the trivial representation to a $(B,\psi)$-generic representation. This bijection can alse be written as a pairing $\<\> : C_\phi \times \Pi_\phi \rw \C$, and this is the language adopted by Arthur. A semi-simple element $s \in \tx{Cent}(\phi,\hat G)$ gives rise to an endoscopic datum $\mf{e}=(H,\mc{H},s,\xi)$ for $G$. We briefly recall the construction: $\hat H = \hat G_s^\circ$, $\mc{H}=\hat H \cdot \phi(W)$, and $\xi$ is the inclusion map. The group $\mc{H}$ is an extension of $W$ by $\hat H$, and hence provides a homomorphism $\Gamma \rw \tx{Out}(\hat H)$. The group $H$ is the unique quasi-split group with complex dual $\hat H$ for which the homomorphism $\Gamma \rw \tx{Out}(H)$ given by the rational structure coincides under the canonical isomorphism $\tx{Out}(H) \cong \tx{Out}(\hat H)$ with the homomorphism $\Gamma \rw \tx{Out}(\hat H)$ given by $\mc{H}$. In addition to the datum $(H,\mc{H},s,\xi)$, Arthur chooses \cite[\S1.2]{Ar11} an $L$-isomorphism $\xi_{H_1}:\mc{H} \rw {^LH}$. By construction $\phi$ factors through $\xi$ and we obtain $\phi_s = \xi_{H_1}\circ\phi$ which is a Langlands parameter for $H$. Associated to this Langlands parameter is an $L$-packet on $H$, whose stable character we denote by $S\Theta_{\phi_s}$ (this is the stable form (2.2.2) in \cite{Ar11}). Let $\mf{z_e}$ denote the pair $(H,\xi_{H_1})$. This is strictly speaking not a $z$-pair in the sense of \cite[\S2.2]{KS99}, because $H$ will in general not have a simply-connected derived group, but this will not cause any trouble. Let $\Delta[\psi,\mf{e},\mf{z_e}]$ denote the Whittaker normalization of the transfer factor. Arthur shows \cite[Thm. 2.2.1]{Ar11} that if $f \in \mc{H}(G)$ and $f^s \in \mc{H}(H)$ have $\Delta[\psi,\mf{e},\mf{z_e}]$-matching orbital integrals, then
\[ S\Theta_{\phi_s}(f^s) = \sum_{\rho \in \tx{Irr}(C_\phi)}\<s,\rho\>\Theta_{\pi_\rho}(f). \]
The group $C_\phi$ is finite and abelian, and $\tx{Irr}(C_\phi)$ is the set of characters of $C_\phi$, which is also a finite abelian group. Performing Fourier-inversion on these finite abelian groups one obtains
\[ \Theta_{\pi_\rho}(f) = |C_\phi|^{-1}\sum_{s \in C_\phi} \<s,\rho\> S\Theta_{\phi^s}(f^s).\]
This formula is the inversion of endoscopic transfer in the $p$-adic case, for symplectic or odd orthogonal groups.

If $G$ is an even orthogonal group, the following subtle complication occurs: Arthur \cite[Thm. 8.4.1]{Ar11} associates to a given tempered Langlands parameter $\phi$ not one, but two $L$-packets $\Pi_{\phi,1}$ and $\Pi_{\phi,2}$. Each of them comes with a canonical bijection $\iota_{B,\psi,i} : \tx{Irr}(C_\phi) \rw \Pi_{\phi,i}$. There exists a group $\tx{\tilde Out}_N(G)$ of order 2 (independent of $\phi$), and for each $\rho \in \tx{Irr}(C_\phi)$ the two representations $\iota_{B,\psi,1}(\rho)$ and $\iota_{B,\psi,2}(\rho)$ are an orbit under the action of this group. For each $\phi$, there is the following dichotomy: Either $\Pi_{\phi,1}=\Pi_{\phi,2}$, and $\tx{\tilde Out}_N(G)$ acts trivially on this $L$-packet; or $\Pi_{\phi,1} \cap \Pi_{\phi,2} = \emptyset$, and the generator of $\tx{\tilde Out}_N(G)$ sends $\Pi_{\phi,1}$ to $\Pi_{\phi,2}$. In this situation, we will take $\iota_{B,\psi}(\rho)$ to mean the pair of representations $\{\iota_{B,\psi,1}(\rho),\iota_{B,\psi,2}(\rho)\}$. Following Arthur, we will use the notation $\mc{\tilde H}(G)$ to denote denote the subalgebra of $\tx{\tilde Out}_N(G)$-fixed functions in $\mc{H}(G)$ if $G$ is a $p$-adic even orthogonal group. For all other simple groups $G$, we set $\mc{\tilde H}(G)$ equal to $\mc{H}(G)$. If $G$ is a product of simple factors $G_i$, then $\mc{\tilde H}(G)$ is determined by $\mc{\tilde H}(G_i)$. All constructions, as well as the two character identities displayed above, continue to hold, but only for functions $f \in \mc{\tilde H}(G)$. Notice that on $f \in \mc{\tilde H}(G)$, the characters of the two representations $\iota_{B,\psi,1}(\rho)$ and $\iota_{B,\psi,2}(\rho)$ evaluate equally, and moreover $f^s \in \mc{\tilde H}(H)$, so the above character relations do indeed make sense.

We will now describe the analogous formulas in the real case, which are results of Shelstad \cite{Sh08}. Let $G$ be a quasi-split connected reductive group defined over $F=\R$ and fix a Whittaker datum $(B,\psi)$. Let $\phi : W \rw {^LG}$ be a tempered Langlands parameter, and $C_\phi$ as above. One complicating factor in the real case is that, while there is a canonical map
\[ \Pi_\phi \rw \tx{Irr}(C_\phi), \]
it is not bijective, but only injective. It was observed by Adams and Vogan that, in order to obtain a bijective map, one must replace $\Pi_\phi$ by the disjoint union of multiple $L$-packets. All these $L$-packets correspond to $\phi$, but belong to different inner forms of $G$. The correct inner forms to take are the ones parameterized by $H^1(F,G_\tx{sc})$. The disjoint union of these inner forms is sometimes called the $K$-group associated to $G$. For an exposition on $K$-groups we refer the reader to \cite[\S2]{Ar99} and \cite{Sh08}. Writing ${\bf\Pi_\phi}$ for the disjoint union of $L$-packets over all inner forms in the $K$-group, one now has again a bijection \cite[\S11]{Sh08}
\[ {\bf\Pi_\phi} \rw \tx{Irr}(C_\phi), \]
whose inverse we will denote by $\iota_{B,\psi}$, and we denote by $\<\>$ again the pairing between $C_\phi$ and ${\bf \Pi_\phi}$ given by this bijection. Note that in \cite{Sh08}, Shelstad uses a variant of $C_\phi$ involving the simply-connected cover of $\hat G$. Since we are only considering quasi-split $K$-groups (i.e. those which contain a quasi-split form), this variant will not be necessary and the group $C_\phi$ will be enough.

From a semi-simple element $s \in \tx{Cent}(\phi,\hat G)$ we obtain an endoscopic datum $\mf{e}$ by the same procedure as just described. A second complicating factor is that, contrary to $p$-adic case discussed above, there will in general be no $L$-isomorphism $\mc{H} \rw {^LH}$. Instead, one chooses a $z$-extension $H_1$ of $H$. Then there exists an $L$-embedding $\xi_{H_1} : \mc{H} \rw {^LH_1}$. We let $\mf{z_e}$ denote the datum $(H_1,\xi_{H_1})$, and $\phi_s = \xi_{H_1}\circ\phi$. This is now a tempered Langlands parameter for $H_1$. Then Shelstad shows \cite[\S11]{Sh08} that for any two functions ${\bf f}\in \mc{H}({^KG}(F))$ and $f^s \in \mc{H}(H_1(F))$ whose orbital integrals are $\Delta[\lambda,\mf{e},\mf{z_e}]$-matching, one has the formulas
\[ S\Theta_{\phi_s}(f^s) = \sum_{\rho \in \tx{Irr}(C_\phi)}\<s,\rho\>\Theta_{\pi_\rho}({\bf f}), \]
and
\[ \Theta_{\pi_\rho}({\bf f}) = |C_\phi|^{-1}\sum_{s \in C_\phi} \<s,\rho\> S\Theta_{\phi^s}(f^s).\]
In the following sections, we will not use the notation $^KG$ for a $K$-group and the bold-face symbols for objects associated with it. Rather, we will treat it like a regular group and denote it by $G$, in order to simplify the statements of the results.

\section{Change of Whittaker data} \label{sec:chanwhit}

Let $G$ be a quasi-split connected reductive group defined over a real or $p$-adic field $F$. Given a finite abelian group $A$, we will write $A^D$ for its group of characters.

\begin{lem} There exists a canonical injection (bijection, if $F$ is $p$-adic)
\[ G_\tx{ad}(F)/G(F) \rw \tx{ker}(H^1(W,\hat Z_\tx{sc}) \rw H^1(W,\hat Z))^{D}. \]
\end{lem}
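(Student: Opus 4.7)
The plan is to construct the map via a pairing against Langlands-parametrized characters and deduce the required (bi)jectivity from Pontryagin duality.

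First I would identify the left-hand side cohomologically: the long exact sequence of Galois cohomology attached to $1 \rw Z \rw G \rw G_\tx{ad} \rw 1$ gives the canonical identification
\[ G_\tx{ad}(F)/G(F) \;\cong\; \tx{ker}\bigl(H^1(F,Z) \rw H^1(F,G)\bigr), \]
which in particular exhibits $G_\tx{ad}(F)/G(F)$ as a finite abelian group. The injection of the lemma will then be defined as the Pontryagin dual of a surjection from $\tx{ker}\bigl(H^1(W,\hat Z_\tx{sc}) \rw H^1(W,\hat Z)\bigr)$ onto $(G_\tx{ad}(F)/G(F))^D$, which I construct next.

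For any connected reductive $F$-group $H$, Langlands' construction produces a canonical homomorphism $\ell_H : H^1(W,Z(\hat H)) \rw \tx{Hom}_\tx{cts}(H(F),\C^\times)$ whose image consists precisely of characters trivial on the image of $H_\tx{sc}(F)$, and which is bijective onto that subgroup when $F$ is $p$-adic (over $\R$ it remains surjective onto the same subgroup but can have kernel, corresponding under Tate--Nakayama duality to the possible non-vanishing of $H^1(\R,H_\tx{sc})$). Applying this to $G_\tx{ad}$ (noting $Z(\hat{G_\tx{ad}})=\hat Z_\tx{sc}$) and to $G$, and using functoriality along $G \rw G_\tx{ad}$, one obtains a commutative square in which the transition map $\hat Z_\tx{sc} \rw \hat Z$ corresponds, via $\ell$, to restriction of characters along $G(F) \rw G_\tx{ad}(F)$. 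Then for any $c$ in the kernel on the $W$-side, the restriction of $\ell_{G_\tx{ad}}(c)$ to $G(F)$ equals $\ell_G$ applied to the vanishing image of $c$ in $H^1(W,\hat Z)$, and is therefore trivial. Hence $\ell_{G_\tx{ad}}(c)$ factors through $G_\tx{ad}(F)/G(F)$, and sending $c$ to the resulting character defines the desired homomorphism
\[ \tx{ker}\bigl(H^1(W,\hat Z_\tx{sc}) \rw H^1(W,\hat Z)\bigr) \;\rw\; (G_\tx{ad}(F)/G(F))^D. \]

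It remains to verify that this homomorphism is surjective (producing the injection of the lemma) and, when $F$ is $p$-adic, also injective (upgrading to a bijection). Surjectivity follows from the surjectivity of $\ell_{G_\tx{ad}}$ onto characters trivial on the image of $G_\tx{sc}(F)$: any character of $G_\tx{ad}(F)/G(F)$ is trivial on the image of $G(F)$, hence a fortiori on the image of $G_\tx{sc}(F)$, so it lifts to some $c$, and the commutative square forces $c$ into the kernel on the $W$-side. Injectivity in the $p$-adic case is the injectivity of $\ell_{G_\tx{ad}}$, a consequence of local class field theory applied to the relevant torus quotients. The main obstacle, and the source of the asymmetry in the statement, is the archimedean case: over $\R$ the map $\ell_{G_\tx{ad}}$ can have nontrivial kernel, so distinct classes in the $W$-side kernel may yield the same character of $G_\tx{ad}(F)/G(F)$, and the pairing may fail to be right-nondegenerate --- which is precisely why only an injection, and not a bijection, is asserted for archimedean $F$.
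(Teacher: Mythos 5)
Your approach is essentially the same as the paper's: build the map by dualizing Langlands' character constructions $H^1(W,\hat Z_\tx{sc})\to G_\tx{ad}(F)^{\tilde D}$ and $H^1(W,\hat Z)\to G(F)^{\tilde D}$, and use that $G_\tx{ad}(F)/G(F)$ is Pontryagin dual to $\tx{ker}(G_\tx{ad}(F)^{\tilde D}\to G(F)^{\tilde D})$. The $p$-adic case is handled correctly. However, your surjectivity argument over $\R$ has a genuine gap.

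You write: given a character $\chi$ of $G_\tx{ad}(F)/G(F)$, lift it to $\tilde\chi\in G_\tx{ad}(F)^{\tilde D}$ and choose $c\in H^1(W,\hat Z_\tx{sc})$ with $\ell_{G_\tx{ad}}(c)=\tilde\chi$; then ``the commutative square forces $c$ into the kernel on the $W$-side.'' But what the square actually gives you is only that the image of $c$ in $H^1(W,\hat Z)$ lies in $\tx{ker}(\ell_G)$, not that it is zero. Over $\R$, $\ell_G$ can have nontrivial kernel --- a point you yourself note in the preceding paragraph --- so this image need not vanish, and $c$ need not lie in $\tx{ker}\bigl(H^1(W,\hat Z_\tx{sc})\to H^1(W,\hat Z)\bigr)$. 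To repair the argument you need the additional fact that $\tx{ker}(\ell_{G_\tx{ad}})$ surjects onto $\tx{ker}(\ell_G)$ under $H^1(W,\hat Z_\tx{sc})\to H^1(W,\hat Z)$: then one can correct $c$ by an element $d\in\tx{ker}(\ell_{G_\tx{ad}})$ mapping to the same class as $c$ in $H^1(W,\hat Z)$, so that $c-d$ still maps to $\tilde\chi$ but now dies in $H^1(W,\hat Z)$. This is precisely the extra input the paper invokes (citing the reinterpretation of these homomorphisms in \cite[\S3.5]{Ka12}), and it does not follow formally from surjectivity of $\ell_{G_\tx{ad}}$ together with the commutative square.
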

\begin{proof}
We will write $G(F)^{\tilde D}$ for the set of continuous characters on $G(F)$ which are trivial on the image of $G_\tx{sc}(F)$. Recall that Langlands has constructed surjective homomorphisms $H^1(W,\hat Z) \rw G(F)^{\tilde D}$ and $H^1(W,\hat Z_\tx{sc}) \rw G_\tx{ad}(F)^{\tilde D}$. If $F$ is $p$-adic, they are also bijective and the statement follows right away, because the finite abelian group $G_\tx{ad}(F)/G(F)$ is Pontryagin dual to
\begin{equation} \tx{ker}(G_\tx{ad}(F)^{\tilde D} \rw G(F)^{\tilde D}) \label{eq:gadg} \end{equation}

If $F$ is real, the kernel of $H^1(W,\hat Z_\tx{sc}) \rw G_\tx{ad}(F)^{\tilde D}$ maps onto the kernel of $H^1(W,\hat Z) \rw G(F)^{\tilde D}$ (this is obvious from the reinterpretation of these homomorphisms given in \cite[\S3.5]{Ka12}). This implies that the kernel of
\[ H^1(W,\hat Z_\tx{sc}) \rw H^1(W,\hat Z) \]
surjects onto \eqref{eq:gadg}.
\end{proof}

Let $\mf{w},\mf{w'}$ be two Whittaker data for $G$. We denote by $(\mf{w},\mf{w'})$ the unique element of $G_\tx{ad}(F)/G(F)$ which conjugates $\mf{w}$ to $\mf{w'}$. We view this element as a character on the finite abelian group
\begin{equation} \tx{ker}(H^1(W,\hat Z_\tx{sc}) \rw H^1(W,\hat Z)) \label{eq:kerzhat} \end{equation}
via the above lemma. Given a Langlands parameter $\phi : W' \rw {^LG}$, we consider the composition
\[ H^0(W,\phi,\hat G) \rw H^0(W,\phi,\hat G_\tx{ad}) \rw H^1(W,\hat Z_\tx{sc}), \]
where $H^0(W,\phi,-)$ denotes the set of invariants of $W$ with respect to the action given by $\phi$. This map is continuous, hence it kills the connected component of the algebraic group $H^0(W,\phi,\hat G)$. Furthermore, it kills $H^0(W,\hat Z)$. Thus we obtain a map
\[ \pi_0(S_\phi/Z(\hat G)^\Gamma) \rw \tx{ker}(H^1(W,\hat Z_\tx{sc}) \rw H^1(W,\hat Z)). \]
In this way, $(\mf{w},\mf{w'})$ gives rise to a character on $\pi_0(S_\phi/Z(\hat G)^\Gamma)$, which we again denote by $(\mf{w},\mf{w'})$.

Now let $s \in S_\phi$. Consider the endoscopic datum $\mf{e}=(H,\mc{H},s,\xi)$ given by $\hat H = \hat G_s^\circ$, $\mc{H}=\hat H \cdot \phi(W)$, and $\xi$ the natural inclusion. Let $\mf{z_e}$ be any $z$-pair for $\mf{e}$. We denote by $\Delta[\mf{w},\mf{e},\mf{z_e}]$ the Langlands-Shelstad transfer factor \cite{LS87}, normalized with respect to $\mf{w}$ (whose definition we will briefly recall in the following proof).

\begin{lem} \label{lem:chanwhit}
\[ \Delta[\mf{w'},\mf{e},\mf{z_e}] = \< (\mf{w},\mf{w'}),s\> \cdot \Delta[\mf{w},\mf{e},\mf{z_e}]. \]
\end{lem}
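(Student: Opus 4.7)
My plan is to compare the two transfer factors by expressing both in a common form, following the definition of the Whittaker normalization given in \cite[\S5.3]{KS99}. Recall that this normalization proceeds by choosing an $F$-splitting $\tx{spl}$ of $G$ and an additive character $\psi_F$ of $F$, yielding a canonical Whittaker datum $\mf{w}_\tx{spl}$ and defining the Whittaker transfer factor as the product of the Langlands-Shelstad transfer factor $\Delta_\tx{LS}(\tx{spl}, \mf{e}, \mf{z_e})$ of \cite{LS87} with an $\epsilon$-factor depending on $\psi_F$ and on a virtual $\Gamma$-representation built from the roots of $G$ and $H$.

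Let $g \in G_\tx{ad}(F)$ represent $(\mf{w}, \mf{w'}) \in G_\tx{ad}(F)/G(F)$. By $G(F)$-equivariance of the transfer factor, I can reduce to the case where $\mf{w}$ and $\mf{w'}$ correspond to two splittings related by $g$, with the same additive character $\psi_F$. The ratio of the two Whittaker transfer factors then reduces to the ratio of $\Delta_\tx{LS}$ for two splittings related by $g$, since the $\epsilon$-factor depends only on $\psi_F$ and on the root data and hence drops out. By the change-of-splitting formula \cite[Lemma 3.5.A]{LS87} (see also \cite[\S4.4]{KS99}), this ratio is the value at $g$ of the Langlands character on $G_\tx{ad}(F)$ attached to the class of $s$ in $H^1(W, \hat Z_\tx{sc})$.

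Unwinding the construction preceding the lemma, this character is precisely the image of $s$ under $\pi_0(S_\phi/Z(\hat G)^\Gamma) \rw \tx{ker}(H^1(W, \hat Z_\tx{sc}) \rw H^1(W, \hat Z))$, paired with $(\mf{w}, \mf{w'})$ through the injection of Lemma 3.1; that is, it equals $\<(\mf{w}, \mf{w'}), s\>$. The main obstacle is keeping the conventions straight: the direction of each Langlands reciprocity map, the use of $\hat Z$ versus $\hat Z_\tx{sc}$, and the sign in the change-of-splitting formula. Once these conventions are carefully aligned, the computation is essentially a routine unwinding of the definitions, but that alignment is where any genuine subtlety in the argument resides.
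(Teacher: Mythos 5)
Your proposal is correct and follows essentially the same route as the paper: factor the Whittaker-normalized transfer factor as $\epsilon(V_{G,H},\psi_F)\cdot\Delta[\tx{spl},\mf{e},\mf{z_e}]$, observe that the $\epsilon$-factor is unchanged when the splitting is replaced by $\tx{Ad}(g)\tx{spl}$ with the same $\psi_F$, invoke the Langlands--Shelstad change-of-splitting formula to express the ratio of the two $\Delta_\tx{LS}$ factors as a Tate--Nakayama pairing of the class $g^{-1}\sigma(g)\in H^1(F,Z(G_\tx{sc}))$ against $s$, and identify this with $\<(\mf{w},\mf{w'}),s\>$ via the construction preceding the lemma. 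The only difference is cosmetic: you phrase the last step in terms of the Langlands character on $G_\tx{ad}(F)$ attached to the image of $s$ in $H^1(W,\hat Z_\tx{sc})$, whereas the paper pairs $s$ directly against $z=g^{-1}\sigma(g)$ using the duality for the complexes $S_\tx{sc}\rw S_\tx{ad}$ and then appeals to [Ka12, \S3.5] for the reinterpretation; these are the same computation.
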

\begin{proof}
Write $\mf{w}=(B,\psi)$. Let $\tb{spl}=(T,B,\{X_\alpha\})$ be a splitting of $G$ containing the Borel subgroup $B$ given by $\mf{w}$ and $\psi_F : F \rw \C^\times$ be a character with the property that $\tx{spl}$ and $\psi_F$ give rise to $\psi$ as in \cite[\S5.3]{KS99}. Then $\Delta[\mf{w},\mf{e},\mf{z_e}]$ is defined as the product
\[ \epsilon(V_{G,H},\psi_F) \cdot \Delta[\tx{spl},\mf{e},\mf{z_e}], \]
where $\Delta[\tx{spl},\mf{e},\mf{z_e}]$ is the normalization of the transfer factor relative to the splitting $\tx{spl}$ as constructed in \cite[\S3.7]{LS87} (where it is denoted by $\Delta_0$), and $\epsilon(V_{G,H},\psi_F)$ is the epsilon factor (with Langlands' normalization, see e.g. \cite[(3.6)]{Tat79}) of the degree-zero virtual $\Gamma$ representation
\[ V_{G,H} = X^*(T)\otimes\C - X^*(T^H)\otimes \C \]
where $T^H$ is any quasi-split maximal torus of $H$.

Let $g \in G_\tx{ad}(F)$ be an element with $\tx{Ad}(g)\mf{w}=\mf{w'}$. Put $\tx{spl'}=\tx{Ad}(g)\tx{spl}$. Then $\tx{spl'}$ and $\psi_F$ give rise to the Whittaker datum $\mf{w'}$, and consequently we have
\[ \Delta[\mf{w'},\mf{e},\mf{z_e}] = \epsilon(V_{G,H},\psi_F) \cdot \Delta[\tx{spl'},\mf{e},\mf{z_e}]\]
Let $z = g^{-1}\sigma(g) \in H^1(F,Z(G_\tx{sc}))$. Choose any maximal torus $S$ of $G$ coming from $H$. According to \cite[\S2.3]{LS87}, we have
\[ \Delta[\tx{spl'},\mf{e},\mf{z_e}] = \<z,s\> \Delta[\tx{spl},\mf{e},\mf{z_e}], \]
where we have mapped $z$ under $H^1(F,Z(G_\tx{sc})) \rw H^1(F,S_\tx{sc})$ and $s$ under $Z(\hat H)^\Gamma \rw \hat S^\Gamma \rw [\hat S_\tx{ad}]^\Gamma$ and paired them using Tate-Nakayama duality. The number $\<z,s\>$ can also be obtained by mapping $s$ under
\[ H^0(W,Z(\hat H)) \rw H^0(W,\phi,\hat G) \rw H^0(W,\phi,\hat G_\tx{ad}) \rw H^1(W,Z(\hat G_\tx{sc})) \]
and pairing it directly with $z$, using the duality between
\[ H^1(F,Z(G_\tx{sc}))=H^1(F,S_\tx{sc} \rw S_\tx{ad})\quad\tx{and}\quad H^1(W,Z(\hat G_\tx{sc}))=H^1(W,S_\tx{sc}\rw S_\tx{ad}). \]
Using \cite[\S3.5]{Ka12}, one sees that this is the same as the number $\<(\mf{w},\mf{w'}),s\>$.
\end{proof}

\begin{thm} Let $G$ be a quasi-split real $K$-group, or a quasi-split symplectic or special orthogonal $p$-adic group. For any tempered Langlands parameter $\phi : W' \rw {^LG}$ and every $\rho \in \tx{Irr}(C_\phi)$, we have
\[ \iota_\mf{w'}(\phi,\rho) = \iota_\mf{w}(\phi,\rho\otimes(\mf{w},\mf{w'})^{-1}). \]
\end{thm}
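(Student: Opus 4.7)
The plan is to compare the endoscopic character identities of Arthur and Shelstad recalled in Section \ref{sec:recall} for the bijections $\iota_\mf{w}$ and $\iota_\mf{w'}$, and use Lemma \ref{lem:chanwhit} to measure precisely how the transfer factors differ.

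First I would fix a tempered parameter $\phi$, a character $\rho \in \tx{Irr}(C_\phi)$, and a test function $f \in \mc{\tilde H}(G)$. The inversion formula recalled above yields
\[ \Theta_{\iota_\mf{w'}(\phi,\rho)}(f) = |C_\phi|^{-1}\sum_{s \in C_\phi}\<s,\rho\>\, S\Theta_{\phi_s}(f^s_\mf{w'}), \]
where $f^s_\mf{w'}$ is any function on $H_1(F)$ whose orbital integrals match those of $f$ with respect to $\Delta[\mf{w'},\mf{e}_s,\mf{z_e}]$ (in the real case $f$ is replaced by the bold-face object on $^KG$). The essential observation is that the stable distribution $S\Theta_{\phi_s}$ is intrinsic to the datum $(H_1,\phi_s)$ and makes no reference to a Whittaker datum on $G$; hence the only $\mf{w}$-dependence on the right-hand side enters through the transfer $f^s_\mf{w'}$.

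Next I would invoke Lemma \ref{lem:chanwhit}, which gives
\[ \Delta[\mf{w'},\mf{e}_s,\mf{z_e}] = \<(\mf{w},\mf{w'}),s\>\cdot \Delta[\mf{w},\mf{e}_s,\mf{z_e}]. \]
Since the matching condition between orbital integrals is linear in the transfer factor, one may take $f^s_\mf{w'} = \<(\mf{w},\mf{w'}),s\>\cdot f^s_\mf{w}$, and therefore
\[ S\Theta_{\phi_s}(f^s_\mf{w'}) = \<(\mf{w},\mf{w'}),s\>\cdot S\Theta_{\phi_s}(f^s_\mf{w}). \]
Substituting this into the inversion formula and absorbing the scalar into the character $\rho$ of $C_\phi$, the right-hand side becomes the inversion formula for $\iota_\mf{w}(\phi,\rho\otimes(\mf{w},\mf{w'})^{-1})$ applied to $f$. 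Hence the two tempered representations have the same distribution character on $\mc{\tilde H}(G)$, and so coincide.

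The main obstacle is the sign bookkeeping: one must check that the conventions for the pairing $\<s,\rho\>$ in the inversion formula, combined with the direction of the identification constructed before Lemma \ref{lem:chanwhit}, really produce the exponent $-1$ on $(\mf{w},\mf{w'})$ rather than $+1$. A secondary technicality is the quasi-split even special orthogonal $p$-adic case, where $\iota_{B,\psi}(\phi,\rho)$ denotes a pair of representations and the character identity is only available for $f \in \mc{\tilde H}(G)$; this restriction is already built into the Arthur-Shelstad framework, so the argument goes through verbatim. For real $K$-groups one uses Shelstad's version of the inversion formula with $z$-extensions, but the computation is formally the same.
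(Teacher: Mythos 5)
Your argument is essentially the paper's, run in the opposite direction: you start from the inverted character identity and substitute the transfer-factor comparison from Lemma \ref{lem:chanwhit}, while the paper starts from the forward identity $S\Theta_{\phi_s}(f^s)=\sum_\rho\<s,\rho\>\Theta_{\iota_{\mf{w}}(\phi,\rho)}(f)$, sets the two expressions for the stable distribution equal, reindexes in $\rho$, and only invokes Fourier inversion abstractly at the very last step. The mechanism and the only nontrivial ingredient (Lemma \ref{lem:chanwhit}) are the same in both versions.

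However, the sign concern you flag at the end is genuine and should be resolved rather than left open. The inversion formula as recalled in Section \ref{sec:recall} is written
$\Theta_{\pi_\rho}(f)=|C_\phi|^{-1}\sum_{s}\<s,\rho\>S\Theta_{\phi^s}(f^s)$,
but the literal Fourier inverse of $S\Theta_{\phi_s}(f^s)=\sum_\rho\<s,\rho\>\Theta_{\pi_\rho}(f)$ carries $\<s,\rho\>^{-1}$ (equivalently $\ol{\<s,\rho\>}$) in the sum over $s$. If you plug the formula as printed into your chain you obtain $\iota_{\mf{w'}}(\phi,\rho)=\iota_{\mf{w}}(\phi,\rho\otimes(\mf{w},\mf{w'}))$, i.e.\ exponent $+1$; with the corrected inversion one has $\<s,\rho\>^{-1}\<(\mf{w},\mf{w'}),s\>=\<s,\rho\otimes(\mf{w},\mf{w'})^{-1}\>^{-1}$, and the claimed exponent $-1$ falls out. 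The paper's presentation avoids this trap entirely by never unpacking the inversion: it compares the two forward identities (one per Whittaker datum), reindexes $\rho\mapsto\rho\otimes(\mf{w},\mf{w'})^{-1}$ inside the sum, and then concludes from the fact that the two functions of $\rho$ have the same transform for every $s$. If you want to keep your ``inverse-first'' organization, you should write the inversion formula explicitly with $\<s,\rho\>^{-1}$ and carry that through. Your remarks about the even special orthogonal restriction to $\mc{\tilde H}(G)$ and the real $K$-group case are correct and match what the paper does.
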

\begin{proof}
Fix a semi-simple $s \in S_\phi$. As described in Section \ref{sec:recall}, the pair $(\phi,s)$ gives rise to an endoscopic datum $\mf{e}$, and after a choice of a $z$-pair $\mf{z_e}=(H_1,\xi_{H_1})$ for $\mf{e}$, it further gives rise to a Langlands parameter $\phi_s$ for $H_1$. If the functions $f \in \mc{\tilde H}(G)$ and $f^s \in \mc{\tilde H}(H_1(F))$ have $\Delta[\mf{w},\mf{e},\mf{z_e}]$-matching orbital integrals, then by Lemma \ref{lem:chanwhit} the functions $f$ and $\<(\mf{w},\mf{w'}),s\>\cdot f^s$ have $\Delta[\mf{w'},\mf{e},\mf{z_e}]$-matching orbital integrals. Thus we have
\begin{eqnarray*}
\sum_\rho \<s,\rho\>\Theta_{\iota_\mf{w'}(\phi,\rho)}(f)&=&\<(\mf{w},\mf{w'}),s\>S\Theta_{\phi^s}(f^s)\\
&=&\<(\mf{w},\mf{w'}),s\>\sum_\rho \<s,\rho\>\Theta_{\iota_\mf{w}(\phi,\rho)}(f)\\
&=&\sum_\rho \<s,\rho\otimes (\mf{w},\mf{w'})\>\Theta_{\iota_\mf{w}(\phi,\rho)}(f)\\
&=&\sum_\rho \<s,\rho\>\Theta_{\iota_\mf{w}(\phi,\rho\otimes (\mf{w},\mf{w'})^{-1})}(f),\\
\end{eqnarray*}
where the sums run over $\rho \in \tx{Irr}(C_\phi)$. Since this is true for all $s$, Fourier-inversion gives the result.
\end{proof}

\section{Tempered representations and their contragredient} \label{sec:cont}

In this section, we will prove formula \eqref{eq:expcont} for quasi-split real $K$-groups and quasi-split $p$-adic symplectic and special orthogonal groups. The bulk of the work lies in an analysis of some properties of transfer factors, which is what we turn to now.

Let $F$ be any local field of characteristic zero and $G$ a quasi-split connected reductive group over $F$. We fix an $F$-splitting $\tx{spl}=(T,B,\{X_\alpha\})$ of $G$. We write $\hat G$ for the complex dual of $G$ and fix a splitting $\hat{\tx{spl}}=(\hat T, \hat B,\{X_{\hat\alpha}\})$. We assume that the action of $\Gamma$ on $\hat G$ preserves $\hat{\tx{spl}}$, and that there is an isomorphism $X_*(T) \cong X^*(\hat T)$ which identifies the $B$-positive cone with the $\hat B$-positive cone. Let $\hat C$ be the Chevalley involution on $\hat G$ which sends $\hat{\tx{spl}}$ to the opposite splitting \cite[\S2]{AV12}. The automorphism $\hat C$ commutes with the action of $\Gamma$ and thus $^LC=\hat C \times \tx{id}_W$ is an $L$-automorphism of $^LG$.

Given a torus $S$ defined over $F$, we will denote by $-1$ the homomorphism $S \rw S$ which sends $s \in S$ to $s^{-1}$. It is of course defined over $F$. Its dual ${^LS} \rw {^LS}$ is given by $(s,w) \mapsto (s^{-1},w)$ and will also be denoted by $-1$. Given a maximal torus $S \subset G$ and a set of $\chi$-data $X=\{\chi_\alpha|\alpha \in R(S,G)\}$ for $R(S,G)$ \cite[\S2.5]{LS87}, we denote by $-X$ the set $\{\chi_\alpha^{-1}|\alpha \in R(S,G)\}$. This is also a set of $\chi$-data.

\begin{lem} \label{lem:chiinv}Let $S \subset G$ be a maximal torus defined over $F$, and let $X$ be $\chi$-data for $R(S,G)$. Let $^L\xi_X : {^LS} \rw {^LG}$ be the canonical $\hat G$-conjugacy class of embeddings associated to $X$. Then we have the diagram
\[ \xymatrix{ {^LG}\ar[r]^{^LC}&{^LG}\\ {^LS}\ar[u]^{^L\xi_X}\ar[r]^{-1}&{^LS}\ar[u]_{^L\xi_{-X}} } \]
\end{lem}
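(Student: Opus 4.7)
My plan is to establish the claimed equality of $\hat G$-conjugacy classes of $L$-embeddings $\Phi := {^LC} \circ {^L\xi_X}$ and $\Psi := {^L\xi_{-X}} \circ (-1)$ by working in the decomposition ${^LS} \cong \hat S \rtimes W$ and checking compatible representatives separately on $\hat S$ and on a set of lifts of $W$. On $\hat S$, the restriction ${^L\xi_X}|_{\hat S}$ is the admissible embedding $\hat S \hookrightarrow \hat T \subset \hat G$ determined by a choice of Borel $B_S \supset S$; it does not involve $\chi$-data, so ${^L\xi_X}|_{\hat S} = {^L\xi_{-X}}|_{\hat S}$. Since $\hat C$ sends $\hat{\tx{spl}}$ to its opposite, it induces $-1$ on $X^*(\hat T)$ and hence restricts to inversion on $\hat T$, giving $\hat C \circ {^L\xi_X}(s) = {^L\xi_X}(s^{-1}) = {^L\xi_{-X}} \circ (-1)(s)$, so $\Phi|_{\hat S} = \Psi|_{\hat S}$.

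On the $W$-part, I would use the explicit formula of \cite{LS87} \S 2.6: with compatible choices one has ${^L\xi_X}(1 \rtimes w) = (r_X(w) \cdot n(\omega_T(w))) \rtimes w$, where $\omega_T(w) \in W(\hat T,\hat G)$ and its Tits lift $n(\omega_T(w))$ depend only on $B_S$, and $r_X(w) \in \hat T$ is the cocycle built from $X$. Since $\hat C$ inverts $\hat T$ and acts trivially on $W(\hat T, \hat G)$ (it induces $-1$ on $X^*(\hat T)$, which commutes with the Weyl group), $\hat C(n(\omega_T(w)))$ is again a lift of $\omega_T(w)$, so
\[ \Phi(1 \rtimes w) = \bigl(r_X(w)^{-1} \cdot t_w \cdot n(\omega_T(w))\bigr) \rtimes w, \qquad t_w := \hat C(n(\omega_T(w))) \cdot n(\omega_T(w))^{-1} \in \hat T. \]
A direct computation with simple reflections, using $n(s_\alpha) = \exp(X_\alpha)\exp(-X_{-\alpha})\exp(X_\alpha)$ together with $\hat C(X_\alpha) = X_{-\alpha}$, gives $\hat C(n(s_\alpha)) = \alpha^\vee(-1) \cdot n(s_\alpha)$; iterating along a reduced expression then yields $t_w = \prod_\beta \beta^\vee(-1)$, the product taken over the positive roots $\beta$ inverted by $\omega_T(w)$. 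Comparing with $\Psi(1 \rtimes w) = (r_{-X}(w) \cdot n(\omega_T(w))) \rtimes w$, the lemma reduces to the torus identity $r_{-X}(w) = r_X(w)^{-1} \cdot t_w$.

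I expect the substantive step to be verifying this last identity directly from the \cite{LS87} formula for $r_X(w)$. Naively, replacing $X$ by $-X$ inverts each factor $\chi_\alpha$ and suggests $r_{-X} = r_X^{-1}$; the content of the lemma is that a careful tracking of the gauge, of the chosen set of representatives for Weyl orbits on roots, and of the interplay with the Tits section must produce exactly the compensating sign character $\prod_\beta \beta^\vee(-1)$ matching $t_w$. This sign bookkeeping will be the main obstacle; the rest of the argument is essentially formal.
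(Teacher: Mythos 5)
Your setup and most of the computation match the paper's: you decompose into the $\hat S$-part (where $\hat C$-inversion on $\hat T$ and independence of the admissible isomorphism from $\chi$-data give the matching immediately) and the $W$-part, and you correctly work out that $\hat C$ twists the Tits lift by $t_w = \hat C\bigl(n(\omega_T(w))\bigr)n(\omega_T(w))^{-1} = \prod_\beta \beta^\vee(-1)$; this is exactly the content of \cite[Lemmas 5.4, 5.8]{AV12} that the paper invokes.

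However, there is a genuine gap in the endgame. You reduce the lemma to the identity $r_{-X}(w) = r_X(w)^{-1}\cdot t_w$ and expect the extra factor $t_w$ to emerge from careful bookkeeping in the Langlands--Shelstad construction of $r_X$. That identity is false: inverting each $\chi_\alpha$ inverts each multiplicative factor in the construction of $r_{\hat B, X}$, so one has $r_{-X}(w) = r_X(w)^{-1}$ \emph{exactly}, with no compensating sign character. The discrepancy $t_w$ is therefore genuinely present: one finds $\Phi(s,w) = \hat\xi(s)^{-1}r_{-X}(w)\,t_w\,n(\omega_T(w))\rtimes w$, which is not equal to $\Psi(s,w)$ as a homomorphism. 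The correct resolution, and the step you are missing, is to recognize that $t_w = t\cdot\omega_T(w)(t^{-1})$ is a $\hat T$-valued coboundary, where $t=\prod_{\alpha>0}\alpha^\vee(i)\in\hat T^\Gamma$ is a lift of $\rho^\vee(-1)$. Since $t$ centralizes the image of $\hat S$, one gets $\Phi = \tx{Ad}(t)\circ\Psi$, and because the lemma asserts commutativity only at the level of $\hat G$-conjugacy classes (which you correctly note at the outset but do not use), the proof is complete. In short: the fix is a single conjugation by $t$, not an adjustment to $r_X$.
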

\begin{proof} To choose a representative $\xi_X$ within its $\hat G$-conjugacy class, we follow the constructions in \cite[\S 2.6]{LS87}. We choose a Borel subgroup defined over $\ol{F}$ and containing $S$. This provides an admissible isomorphism $\hat\xi : \hat S \rw \hat T$. For $w \in W$, let $\sigma_S(w) \in \Omega(\hat T,\hat G)$ be defined by
\[ \hat\xi( {^ws} ) = {^{\sigma_S(w)w}\hat\xi(s)}. \]
Then a representative of ${^L\xi_X}$ is given by
\[ {^L\xi_X}(s,w) = [\hat\xi(s)r_{\hat B,X}(w) n(\sigma_S(w)),w] \]
Using the fact that $\hat C$ acts by inversion on $\hat T$ and \cite[Lemma  5.8]{AV12}, we see
\[ {^LC}\circ{^L\xi_X}(s,w) = [\hat\xi(s)^{-1}r_{\hat B,X}(w)^{-1} n(\sigma_S(w)^{-1})^{-1},w] \]
One sees that $r_{\hat B,X}(w)^{-1}=r_{\hat B,-X}(w)$. Moreover, by Lemma 5.4 of loc. cit we have
\[ n(\sigma_S(w)^{-1})^{-1} = [t\cdot \sigma_S(w)t^{-1}] n(\sigma_S(w)), \]
where $t \in \hat T$ is any lift of $\rho^\vee(-1) \in \hat T_\tx{ad}$, $\rho^\vee$ being half the sum of the positive coroots. We can choose $t \in \hat T^\Gamma$ by choosing a root $i$ of $-1$ and putting $t = \prod_{\alpha \in R(\hat T,\hat B)}\alpha^\vee(i)$. Then we see
\[ {^LC}\circ{^L\xi_X}(s,w) = \tx{Ad}(t)\circ{^L\xi_{-X}}\circ(-1)(s,w). \]
\end{proof}

Consider a collection $c=(c_\alpha)_{\alpha \in R(T,G)}$ of elements of $\ol{F}^\times$ which is invariant under $\Omega(T,G) \rtimes \Gamma$. Then $(T,B,\{c_\alpha X_\alpha\})$ is another $F$-splitting of $G$, which we will denote by $c\cdot \tx{spl}$. Given any maximal torus $S \subset G$ and any Borel subgroup $B_S$ containing $S$ and defined over $\ol{F}$, the admissible isomorphism $T \rw S$ which sends $B$ to $B_S$ transports $c$ to a collection $(c_\alpha)_{\alpha \in R(S,G)}$ which is invariant under $\Omega(S,G) \rtimes \Gamma$. Moreover, this collection is independent of the choice of $B_S$ (and also of $B$). If $A=(a_\alpha)_{\alpha \in R(S,G)}$ is a set of $a$-data for $R(S,G)$ \cite[\S2.2]{LS87}, then $c\cdot A=(c_\alpha a_\alpha)_{\alpha \in R(S,G)}$ is also a set of $a$-data.

\begin{lem} \label{lem:splcng} With the above notation, we have
\[  \lambda(S,A,c \cdot \tx{spl}) = \lambda(S,c\cdot A,\tx{spl}) ,\]
where $\lambda$ denotes the splitting invariant constructed in \cite[\S 2.3]{LS87}.
\end{lem}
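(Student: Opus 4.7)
The plan is to unravel the explicit construction of $\lambda$ in \cite[\S 2.3]{LS87} and compare how each side is affected by the two rescalings. Choose a Borel $B_S \supset S$ defined over $\ol{F}$, yielding an admissible isomorphism $\phi:T \rw S$ sending $B$ to $B_S$, and for each $\sigma \in \Gamma$ the unique Weyl element $\omega_T(\sigma) \in \Omega(T,G)$ with $\sigma\circ\phi = \phi\circ\omega_T(\sigma)$. The splitting invariant $\lambda(S,A,\tx{spl})(\sigma)$ is then the class of the product $x(\sigma)\cdot n(\omega_T(\sigma))$, where $n(\omega_T(\sigma)) \in N(T)$ is the Tits section built from the pinning $\{X_\alpha\}$ and $x(\sigma) = \prod_\alpha \alpha^\vee(a_\alpha) \in T$ is the $a$-data contribution, the product running over the set $R_\sigma$ of positive roots made negative by $\omega_T(\sigma)^{-1}$.

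Since $\tx{spl}$ enters only through $n(\omega_T(\sigma))$ and $A$ only through $x(\sigma)$, the task reduces to showing that the two rescalings produce the same $T$-valued correction. First, I would track the effect on the Tits section. For a simple reflection $s_\alpha$, the $\mathfrak{sl}_2$-level computation in the corresponding rank-one subgroup (using the $\Omega$-invariance $c_{-\alpha}=c_\alpha$ to handle the opposite root vector coherently) yields $n_{c\cdot\tx{spl}}(s_\alpha) = \alpha^\vee(c_\alpha)\cdot n_{\tx{spl}}(s_\alpha)$. Iterating along a reduced decomposition $\omega_T(\sigma) = s_{\beta_1}\cdots s_{\beta_k}$ and pushing each diagonal factor to the left via the appropriate Weyl translates (invoking the $\Omega$-invariance of $c$ to make each transported coroot factor the $\alpha^\vee(c_\alpha)$ of the correctly swept-out root) gives
\[ n_{c\cdot\tx{spl}}(\omega_T(\sigma)) = \Bigl(\prod_{\alpha \in R_\sigma}\alpha^\vee(c_\alpha)\Bigr)\cdot n_{\tx{spl}}(\omega_T(\sigma)). \]
Second, direct inspection of the definition of $x(\sigma)$ gives
\[ x_{c\cdot A}(\sigma) = \Bigl(\prod_{\alpha\in R_\sigma}\alpha^\vee(c_\alpha)\Bigr)\cdot x_A(\sigma). \]
Comparing these two corrections, which land in the same subgroup of $T$ and involve the same product of $\alpha^\vee(c_\alpha)$, yields the claimed identity.

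The main obstacle is the Tits section computation: the Tits section has subtle sign and ordering conventions, and independence of the correction from the chosen reduced decomposition must be invoked. The $\Omega\rtimes\Gamma$-invariance of $c$ is used in two essential ways, to make the rank-one computation symmetric in $\pm\alpha$ and to make the sweep-out product along any reduced expression of $\omega_T(\sigma)$ depend only on $\omega_T(\sigma)$ itself. The identification of the set of roots appearing in the Tits section correction with the set $R_\sigma$ used in the $a$-data formula is then the standard description of the inversion set of $\omega_T(\sigma)^{-1}$, and the lemma follows.
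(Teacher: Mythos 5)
Your proposal is correct and follows essentially the same route as the paper's proof: unravel the Langlands--Shelstad construction of $\lambda$, compute at the $\mathfrak{sl}_2$-level that $n_{c\cdot\tx{spl}}(s_\alpha)=\alpha^\vee(c_\alpha)\,n_{\tx{spl}}(s_\alpha)$, iterate along a reduced expression to get the product of $\alpha^\vee(c_\alpha)$ over the appropriate inversion set (the paper cites Bourbaki, Ch.\,6, \S1, no.\,6, Cor.\,2 for this sweep-out), and observe that the same product arises from the $a$-data term. The only small wrinkle to watch is the indexing convention for the inversion set (the paper takes positive $\alpha$ with $w_S(\sigma)\alpha<0$), but this is a notational point and does not affect the argument.
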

\begin{proof} We begin by recalling the construction of the splitting invariant.
For a simple root $\alpha \in R(T,G)$, let $\eta^{\tx{spl}}_\alpha : \tx{SL}_2 \rw G$ be the homomorphism determined by the splitting $\tx{spl}$. We put
\[ n_{\tx{spl}}(s_\alpha) = \eta^{\tx{spl}}_\alpha\begin{pmatrix} 0&1\\-1&0\end{pmatrix}. \]
For any $w \in \Omega(T,G)$ choose a reduced expression $w=s_{\alpha_1} \cdots s_{\alpha_n}$ and set
\[ n_{\tx{spl}}(w) = n_{\tx{spl}}(s_{\alpha_1}) \cdots n_{\tx{spl}}(s_{\alpha_n}). \]
This product is independent of the choice of reduced expression.

We choose a Borel subgroup $B_S \subset G$ defined over $\ol{F}$ and containing $S$, and an element $h \in G(\ol{F})$ such that $\tx{Ad}(h)(T,B)=(S,B_S)$. Then, for $\sigma \in \Gamma$ and $s \in S$, we have
\[ \tx{Ad}(h^{-1})[{^\sigma s}] ={^{w_S(\sigma)\sigma}\tx{Ad}(h^{-1})[s]} \]
for some $w_S(\sigma) \in \Omega(T,G)$. Then $\lambda(S,A,\tx{spl}) \in H^1(F,S)$ is the element whose image under $\tx{Ad}(h^{-1})$ is represented by the cocycle
\begin{equation} \sigma \mapsto \prod_{\substack{\alpha \in R(T,G)\\ \alpha>0\\ w_S(\sigma)\alpha<0}} \alpha^\vee(a_{\tx{Ad}(h)\alpha}) \cdot n_{\tx{spl}}(w_S(\sigma)) \cdot \sigma(h^{-1})h, \label{eq:splinv} \end{equation}

We now examine the relationship between $n_{\tx{spl}}$ and $n_{c\cdot\tx{spl}}$. Recall the standard triple
\[ E=\begin{bmatrix}0&1\\0&0\end{bmatrix}\quad H=\begin{bmatrix}1&0\\0&-1\end{bmatrix}\quad F=\begin{bmatrix}0&0\\1&0\end{bmatrix} \]
in $\tx{Lie}(\tx{SL}_2)$. The differential $d\eta_{\tx{spl}}$ sends $(E,H,F)$ to $(X_\alpha,H_\alpha,X_{-\alpha})$, where $H_\alpha=d\alpha^\vee(1)$ and $X_{-\alpha} \in \mf{g}_{-\alpha}$ is determined by $[X_\alpha,X_{-\alpha}]=H_\alpha$. On the other hand, the differential $d\eta_{c\tx{spl}}$ sends $(E,H,F)$ to $(c_\alpha\cdot X_\alpha,H_\alpha,c_\alpha^{-1}X_{-\alpha})$. Thus
\[ \eta_{c\cdot\tx{spl}} = \eta_{\tx{spl}}\circ \tx{Ad}\begin{bmatrix} \sqrt{c_\alpha}&0\\ 0&\sqrt{c_\alpha}^{-1}\end{bmatrix} \]
for an arbitrary choice of a square root of $c_\alpha$. It follows that
\[ n_{c\cdot\tx{spl}}(s_\alpha)=\alpha^\vee(c_\alpha)\cdot n_{\tx{spl}}(s_\alpha). \]
Using induction and \cite[Ch.6, \S1, no. 6, Cor 2]{Bou02}, we conclude that
\[ n_{c\cdot\tx{spl}}(w) = \prod_{\substack{\alpha \in R(T,G)\\ \alpha>0\\ w_S(\sigma)\alpha<0}} \alpha^\vee(c_\alpha) \cdot n_{\tx{spl}}(w). \]
The statement follows by comparing the formula \eqref{eq:splinv} for $\lambda(S,A,c\cdot\tx{spl})$ and $\lambda(S,c\cdot A,\tx{spl})$.
\end{proof}

Let $\theta$ be an automorphism which preserves \tx{spl}, and let $\tb{a} \in H^1(W,Z(\hat G))$. The class $\tb{a}$ corresponds to a character $\omega : G(F) \rw \C^\times$. Let $\hat \theta$ be the automorphism dual to $\theta$, which preserves $\hat{\tx{spl}}$. Note that $\hat\theta$ commutes with the action of $\Gamma$. We will write $^L\theta$ for $\theta \times \tx{id}_W$.

Let $\hat G^1$ be the connected component of the group $\hat G^{\hat\theta}$, let $\hat T^1=\hat T \cap \hat G^1$ and $\hat B^1=\hat B \cap \hat G^1$. Then $\hat G^1$ is a reductive group and $(\hat T^1,\hat B^1)$ is a Borel pair for it. The set $\Delta(\hat T^1,\hat B^1)$ of $\hat B^1$-simple roots for $\hat T^1$ is the set of restrictions to $\hat T^1$ of the set $\Delta(\hat T,\hat B)$ of $\hat B$-simple roots for $\hat T$. Moreover, the fibers of the restriction map
\[ \tx{res}: \Delta(\hat T,\hat B) \rw \Delta(\hat T^1,\hat B^1) \]
are precisely the $\Gamma$-orbits in $\Delta(\hat T,\hat B)$. We will write $\alpha_\tx{res}$ for the image of $\alpha$ under $\tx{res}$.

We can extend the pair $(\hat T^1,\hat B^1)$ to a $\Gamma$-splitting $\hat{\tx{spl}}^1=(\hat T^1,\hat B^1,\{X_{\alpha_\tx{res}}\})$ of $\hat G^1$ by setting for each $\alpha_\tx{res} \in \Delta(\hat T^1,\hat B^1)$
\[ X_{\alpha_\tx{res}} = \sum_{\substack{\beta \in \Delta(\hat T,\hat B) \\ \beta_\tx{res}=\alpha_\tx{res}}} X_\beta. \]

Since $\hat\theta$ commutes with $\Gamma$, the group $\hat G^1$ and the splitting just constructed is preserved by $\Gamma$. Thus, $\hat G^1 \rtimes W$ is the $L$-group of a connected reductive group $G^1$. Moreover, since $\hat\theta$ also commutes with $\hat C$, the automorphism $\hat C$ preserves the group $\hat G^1$ and acts by inversion on its maximal torus $\hat T^1$. Thus, $\hat C$ is a Chevalley involution for $\hat G^1$. However, it is not true that $\hat C$ sends the splitting $\hat{\tx{spl}}^1$ to its opposite. Rather, it sends $\hat{\tx{spl}}^1$ to the splitting of $\hat G^1$ constructed from the opposite of $\hat{\tx{spl}}$ by the same procedure as above. That this splitting differs from the opposite of $\hat{\tx{spl}}^1$ is due to the fact that for $\alpha_\tx{res} \in R(\hat T^1,\hat G^1)$, the coroot $H_{\alpha_\tx{res}}$ is not always the sum of $H_\beta$ for all $\beta$ in the $\Gamma$-orbit corresponding to $\alpha_\tx{res}$. In fact, we have
\[ H_{\alpha_\tx{res}} = c_{\alpha_\tx{res}} \cdot \sum_{\substack{\beta \in \Delta(\hat T,\hat B)\\ \beta_\tx{res}=\alpha_\tx{res}}} H_\beta \]
where $c_\alpha=1$ if $\alpha_\tx{res}$ is of type $R_1$, and $c_\alpha=2$ if $\alpha_\tx{res}$ is of type $R_2$. Nevertheless, since both the splitting opposite to $\hat{\tx{spl}}^1$ and the splitting $\hat C(\hat{\tx{spl}}^1)$ are fixed by $\Gamma$, there exists an element of $[\hat G^1]^\Gamma$ that conjugates the one to the other. In other words, ${^LC}$ is $\hat G^1$-conjugate to the Chevalley involution on $^LG^1$ determined by the splitting $\hat{\tx{spl}}^1$.

Given an endoscopic datum $\mf{e}=(H,s,\mc{H},\xi)$ for $(G,\theta,\tb{a})$, we write ${^LC}(\mf{e})$ for the quadruple $(H,\hat C(s^{-1}),\mc{H},{^LC}\circ{^L\theta}\circ\xi)$. Given a z-pair $\mf{z_e}=(H_1,\xi_{H_1})$ for $\mf{e}$, put ${^LC^H}(\mf{z_e})=(H_1,{^LC^H}\circ\xi_{H_1})$, where $^LC^H$ is the Chevalley involution on $^LH_1$.

\begin{fct} The quadruple ${^LC}(\mf{e})$ is an endoscopic datum for $(G,\theta^{-1},\tb{a})$, and ${^LC^H}(\mf{z_e})$ is a $z$-pair for it. If $\mf{e}'$ is an endoscopic datum for $(G,\theta,\tb{a})$ equivalent to $\mf{e}$, then ${^LC}(\mf{e}')$ is equivalent to ${^LC}(\mf{e})$. \end{fct}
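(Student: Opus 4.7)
The plan is to verify the three assertions separately, with the central technical input being that the Chevalley involution $\hat C$ commutes with $\hat\theta$ as automorphisms of $\hat G$. I would establish this commutation first: both automorphisms preserve $\hat T$, and since $\hat C|_{\hat T}$ is inversion it commutes with the torus map $\hat\theta|_{\hat T}$, so they agree on $\hat T$. On simple root vectors, since $\hat\theta$ preserves the fixed pinning (hence, automatically, its opposite), both $\hat C\hat\theta$ and $\hat\theta\hat C$ send $X_{\hat\alpha}$ to $X_{-\hat\theta(\hat\alpha)}$. Agreement on a generating set of $\hat G$ forces $\hat C\hat\theta = \hat\theta\hat C$.

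For the first assertion, I would verify the axioms of a twisted endoscopic datum for $(G,\theta^{-1},\tb{a})$ directly. The centralizer condition $\xi'(\hat H) = \tx{Cent}(\hat C(s^{-1})\hat\theta^{-1},\hat G)^\circ$ follows from a short calculation: given $h$ with $hs\hat\theta(h)^{-1}=s$, the element $\hat C(\hat\theta(h))$ satisfies the corresponding equation for $\hat C(s^{-1})\hat\theta^{-1}$, using $\hat C\hat\theta=\hat\theta\hat C$ together with the identity $\hat\theta(h)s^{-1}h^{-1}=(hs\hat\theta(h)^{-1})^{-1}$. Expanding $\xi'$ along a Weil-group section shows that the new cocycle $z':W \rw Z(\hat G)$ representing the central class equals $\hat C(z)^{-1}$, where $z$ is the cocycle for $\xi$; since $Z(\hat G)\subset \hat T$ and $\hat C$ acts on $\hat T$ by inversion, one has $z'=z$, so the class $\tb{a}$ is preserved. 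The quasi-split $F$-form $H$ is unchanged because neither $\mc{H}$ nor its action on the abstract subgroup $\hat H\subset\mc{H}$ is modified.

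For the second assertion, ${^LC^H}\circ\xi_{H_1}$ is the composition of an $L$-embedding with an $L$-automorphism of ${^LH_1}$, so it is an $L$-embedding $\mc{H}\rw{^LH_1}$, which is all that is required of a $z$-pair in the paper's convention. For the third assertion, if $g\in\hat G$ realizes an equivalence $\mf{e}\sim\mf{e}'$, then $\hat C(\hat\theta(g))$ realizes an equivalence ${^LC}(\mf{e})\sim{^LC}(\mf{e}')$, and the central modification appearing in the definition of equivalence is preserved by the same inversion mechanism on $Z(\hat G)$.

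The main obstacle will be careful bookkeeping in the twisted setting, distinguishing how the semisimple element, the $L$-embedding, and the cocycle representing $\tb{a}$ each transform under ${^LC}\circ{^L\theta}$ in parallel. Once the commutation $\hat C\hat\theta=\hat\theta\hat C$ is in hand, however, each of these transformations collapses to a one-line identity, and the inversion action of $\hat C$ on $Z(\hat G)$ is precisely what makes the cohomology class $\tb{a}$ come out fixed.
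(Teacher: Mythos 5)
The paper gives no proof of this fact beyond the word ``Straightforward,'' so there is no alternative argument to compare against; your approach — a direct verification of the axioms, with the commutation $\hat C\hat\theta = \hat\theta\hat C$ and the inversion action of $\hat C$ on $Z(\hat G) \subset \hat T$ as the two driving identities — is exactly what the paper implicitly has in mind (indeed, it explicitly records $\hat C\hat\theta=\hat\theta\hat C$ a few lines earlier). Your calculation showing that $h\mapsto \hat C(\hat\theta(h))$ sends $\tx{Cent}(s\hat\theta,\hat G)$ to $\tx{Cent}(\hat C(s^{-1})\hat\theta^{-1},\hat G)$ is correct, and the same homomorphism handles the equivalence assertion.

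One place to tighten: your justification that ${^LC^H}(\mf{z_e})$ is a $z$-pair says ``being an $L$-embedding is all that is required.'' The Kottwitz--Shelstad notion of a $z$-pair in fact requires $\xi_{H_1}$ to restrict on $\hat H\subset\mc{H}$ to the canonical inclusion $\hat H\hookrightarrow\hat H_1$ dual to $H_1\twoheadrightarrow H$. Since ${^LC^H}|_{\hat H_1}=\hat C^{H_1}$ is nontrivial, ${^LC^H}\circ\xi_{H_1}$ restricts on $\hat H$ to $\hat C^{H_1}\circ(\hat H\hookrightarrow\hat H_1)$, not to the canonical inclusion itself. What saves the assertion is that $\hat H$ contains $\hat H_{1,\tx{der}}$ (the quotient $\hat H_1/\hat H\cong\hat Z_1$ is a torus), so $\hat C^{H_1}$ preserves $\hat H$ and restricts on it to a Chevalley involution; one should then observe that the resulting quadruple still satisfies the $z$-pair compatibility once it is read against the twisted embedding ${^LC}\circ{^L\theta}\circ\xi$ of ${^LC}(\mf{e})$, rather than against the original $\xi$. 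You should make this compatibility explicit rather than appealing to a relaxed convention. This is a minor gap of justification, not a wrong step, and it is one the paper's one-word proof likewise leaves unaddressed.
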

\begin{proof} Straightforward. \end{proof}

Let $(B,\psi)$ be a $\theta$-stable Whittaker datum for $G$. Then, associated to $(G,\theta,\tb{a})$, $(B,\psi)$, $\mf{e}$, and $\mf{z_e}$, we have the Whittaker normalization of the transfer factor for $G$ and $H_1$. In fact, as explained in \cite[\S5.5]{KS12}, there are two different such normalizations -- one adapted to the classical local Langlands correspondence for tori \cite[(5.5.2)]{KS12}, and one adapted to the renormalized correspondence \cite[(5.5.1)]{KS12}. To be consistent with the notation chosen in \cite{KS12}, we will call these transfer factors $\Delta'[\psi,\mf{e},\mf{z_e}]$ (for the classical local Langlands correspondence), and $\Delta_D[\psi,\mf{e},\mf{z_e}]$ (for the renormalized correspondence). On the other hand, associated to $(G,\theta^{-1},\tb{a})$, $(B,\psi^{-1})$, $^LC(\mf{e})$, and $^LC^H(\mf{z_e})$, we also have the Whittaker normalization of the transfer factor, again in the two versions. We will call these $\Delta'[\psi^{-1},{^LC}(\mf{e}),{^LC^H}(\mf{z_e})]$ and $\Delta_D[\psi^{-1},{^LC}(\mf{e}),{^LC^H}(\mf{z_e})]$. In the case $\theta=1$ and $\tb{a}=1$ (i.e. ordinary endoscopy), one also has the normalizations $\Delta$ and $\Delta'_D$ \cite[\S5.1]{KS12}. The normalization $\Delta$ is the one compatible with \cite{LS87}.

\begin{pro} \label{pro:deltainv} Let $(B,\psi)$ be a $\theta$-stable Whittaker datum for $G$. Let $\gamma_1 \in H_1(F)$ be a strongly $G$-regular semi-simple element, and let $\delta \in G(F)$ be a strongly-$\theta$-regular $\theta$-semi-simple element. We have
\[ \Delta'[\psi,\mf{e},\mf{z_e}](\gamma_1,\delta) = \Delta'[\psi^{-1},{^LC}(\mf{e}),{^LC^H}(\mf{z_e})](\gamma_1^{-1},\theta^{-1}(\delta^{-1})). \]
The same equality holds with $\Delta_D$ in place of $\Delta'$. Moreover, in the setting of ordinary endoscopy, the equality also holds for $\Delta$ and $\Delta'_D$.
\end{pro}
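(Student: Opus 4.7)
The proof proceeds by comparing the two sides factor-by-factor in the standard decomposition of the transfer factor. I would write
\[ \Delta'[\psi,\mf{e},\mf{z_e}] = \epsilon_L(V_{G,H},\psi_F) \cdot \Delta_I \cdot \Delta_{II} \cdot \Delta_{III_1}^{-1} \cdot \Delta_{III_2} \cdot \Delta_{IV} \]
(with the appropriate twisted analog from \cite{KS99} when $\theta \neq 1$), and do the same for the right-hand side under the simultaneous replacements $\mf{e} \mapsto {^LC}(\mf{e})$, $\mf{z_e} \mapsto {^LC^H}(\mf{z_e})$, $(\gamma_1,\delta) \mapsto (\gamma_1^{-1},\theta^{-1}(\delta^{-1}))$, $\psi \mapsto \psi^{-1}$. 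The substitution $\delta \mapsto \theta^{-1}(\delta^{-1})$ is engineered so that the $\theta$-norm of the new element is the inverse of the original $\theta$-norm; consequently the maximal torus $S \subset G$ and its corresponding torus in $H_1$ arising from the admissible embedding are unchanged.

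The heart of the argument is the application of the two preceding lemmas. Lemma \ref{lem:chiinv} handles the $\Delta_{III}$ factor: passing to $^LC(\mf{e})$ replaces the canonical $L$-embedding $^L\xi_X$ associated with a choice of $\chi$-data $X$ for $S$ by an embedding $\hat G$-conjugate to $^L\xi_{-X} \circ (-1)$. The simultaneous inversion of $\gamma_1$ absorbs the $(-1)$, and the transition $X \mapsto -X$ corresponds to an admissible change of $\chi$-data, so the pairing defining $\Delta_{III}$ is preserved. Lemma \ref{lem:splcng} handles the splitting invariant entering $\Delta_I$: the change $\psi \mapsto \psi^{-1}$ can be realized by replacing the pinning $\tx{spl}$ with $(-1) \cdot \tx{spl}$, and the lemma equates $\lambda(S,A,(-1)\cdot\tx{spl})$ with $\lambda(S,(-1)\cdot A,\tx{spl})$, reabsorbing the change of splitting into a modification of the $a$-data.

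The remaining ingredients are root-theoretic. The identity $\alpha(\delta^{-1}) - 1 = -\alpha(\delta)^{-1}(\alpha(\delta) - 1)$ expresses the new $\Delta_{II}$ in terms of the old one plus a residual $\prod_\alpha \chi_\alpha(-\alpha(\delta)^{-1}/a_\alpha)$; this residual, together with the sign change in the $a$-data forced by Lemma \ref{lem:splcng} and the substitution $X \mapsto -X$ on the $\chi$-data, is absorbed into $\Delta_I$. The factor $\Delta_{IV}$ is a product of absolute values and is manifestly invariant. The epsilon factor transforms by $\epsilon_L(V_{G,H},\psi_F^{-1}) = \det(V_{G,H})(-1) \cdot \epsilon_L(V_{G,H},\psi_F)^{-1}$, and a standard calculation exploiting the virtual degree-zero character of $V_{G,H}$ controls the resulting sign. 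Finally, the normalizations $\Delta$, $\Delta'$, $\Delta_D$, $\Delta'_D$ differ from one another only by replacing $s$ by $s^{-1}$ in the $\Delta_{III}$ pairing \cite[\S5.1, \S5.5]{KS12}, and since the Chevalley involution sends $s$ to $\hat C(s^{-1})$, verifying the identity for any one of these normalizations yields it for the others.

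The main obstacle is the combinatorial bookkeeping of signs: no individual factor is preserved on its own, and the invariance emerges only after aggregating contributions from inverted roots, altered $\chi$- and $a$-data, the epsilon factor, and the transition $s \mapsto \hat C(s^{-1})$. The twisted case introduces the additional technicality that $\Delta_I$ and its relatives are built from the $\theta$-fixed group $G^1$; here one uses the observation made just before the preceding Fact, that $\hat C$ restricts to a Chevalley involution on $\hat G^1$ up to inner conjugation by an element of $[\hat G^1]^\Gamma$ compensating for the constants $c_{\alpha_\tx{res}}$, to reduce the twisted computation to the untwisted one.
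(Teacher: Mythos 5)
Your overall strategy is the same as the paper's: decompose the transfer factor, compare factor-by-factor under the simultaneous substitutions, and use Lemma~\ref{lem:chiinv} for $\Delta_{III}$ and Lemma~\ref{lem:splcng} (together with the $\hat G^1$ observation) for $\Delta_I$. However, your final paragraph contains a genuine gap. You claim that $\Delta$, $\Delta'$, $\Delta_D$, $\Delta'_D$ differ from one another only by replacing $s$ with $s^{-1}$ in the $\Delta_{III}$ pairing, and that consequently verifying the identity for one normalization yields it for the others. This is false in twisted endoscopy: while $\Delta$ vs.\ $\Delta'$ (and $\Delta_D$ vs.\ $\Delta'_D$) are indeed related by $s \mapsto s^{-1}$, the relationship between $\Delta'$ and $\Delta_D$ is of a different nature, and the statement for one does \emph{not} formally follow from the statement for the other. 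The paper explicitly acknowledges this and observes instead that the factor-by-factor verification goes through verbatim for both $\Delta'$ and $\Delta_D$. Your formal reduction therefore does not close the case of twisted endoscopy, and you would need to note that the same computation is carried out for each of $\Delta'$ and $\Delta_D$ separately.

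There are also smaller inaccuracies in the bookkeeping. Your residual for $\Delta_{II}$ should be $\prod_\alpha \chi_\alpha(-\alpha(\delta)^{-1})$ rather than $\prod_\alpha \chi_\alpha(-\alpha(\delta)^{-1}/a_\alpha)$; but the cleaner and correct route is simply to record $\Delta_{II}[A,X](\gamma_1^{-1},\theta^{-1}(\delta^{-1})) = \Delta_{II}[-A,-X](\gamma_1,\delta)$ and carry $-A,-X$ forward. Likewise, saying that ``the pairing defining $\Delta_{III}$ is preserved'' is imprecise: what Lemma~\ref{lem:chiinv} gives is $\Delta_{III}[{^LC}(\mf{e}),{^LC^H}(\mf{z_e}),X](\gamma_1^{-1},\theta^{-1}(\delta^{-1})) = \Delta_{III}[\mf{e},\mf{z_e},-X](\gamma_1,\delta)$, so the change of $\chi$-data has not disappeared; it disappears only when $\Delta_{II}^{-1}$ and $\Delta_{III}$ are combined, since $-X$ is a legitimate choice. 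After those cancellations the sole remaining discrepancy is $\Delta_I$ evaluated at $A$ versus $-A$, and \emph{that} is what the combination of the $\S5.3$ KS99 argument and Lemma~\ref{lem:splcng} is used to repair. Finally, for a virtual representation of degree zero the correct relation is $\epsilon(V_{G,H},\psi_F^{-1}) = \det(V_{G,H})(-1)\cdot\epsilon(V_{G,H},\psi_F)$, without the inverse; but the paper avoids computing the epsilon factor change directly by invoking the independence of $\epsilon(V_{G,H},\psi_F)\cdot\Delta_I[\tx{spl},A]$ from the choice of realization $(\tx{spl},\psi_F)$ of the Whittaker datum.
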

\begin{proof}
Let us fist discuss the different versions of the transfer factor. In ordinary endoscopy, one obtains $\Delta$ from $\Delta'$ by replacing $s$ with $s^{-1}$. Thus it is clear that the above equality will hold for the one if and only if it holds for the other. The same is true for $\Delta_D$ and $\Delta'_D$. Returning to twisted endoscopy, the difference between $\Delta'$ and $\Delta_D$ is more subtle, and the statement for the one does not formally follow from the statement for the other. However, the proof for both cases is the same, and we will give it for the case of $\Delta_D$.

One sees easily that $\gamma$ is a $\theta$-norm of $\delta$ precisely when $\gamma^{-1}$ is a $\theta^{-1}$-norm of $\theta^{-1}(\delta^{-1})$. We assume that this is the case. Let $S_{H_1} \subset H_1$ be the centralizer of $\gamma_1$, let $S_H \subset H$ be the image of $S_{H_1}$. The torus $S_H$ is the centralizer of the image $\gamma \in H(F)$ of $\gamma_1$. We choose a $\theta$-admissible maximal torus $S \subset G$, an admissible isomorphism $\phi : S_H \rw S_\theta$, an element $\delta^* \in S(\ol{F})$ whose image in $S_\theta$ equals $\phi(\gamma)$, and an element $g \in G_\tx{sc}(\ol{F})$ with $\delta^*=g\delta\theta(g^{-1})$. Then
\[ \theta^{-1}(\delta^{*-1})= g \cdot \theta^{-1}(\delta^{-1}) \cdot \theta^{-1}(g^{-1}). \]

To analyze the transfer factor, we choose $\theta$-invariant $a$-data A for $R(S,G)$ and $\chi$-data X for $R_\tx{res}(S,G)$. Moreover, we fix an additive character $\psi_F : F \rw \C^\times$ and assume that the splitting $\tx{spl}=(T,B,{X_\alpha})$ and the character $\psi_F$ give rise to the fixed Whittaker datum $(B,\psi)$.

Up to equivalence of endoscopic data we may assume $s \in \hat T$. Then $\hat C(s^{-1})=s$, so that the difference between $\mf{e}$ and ${^LC}(\mf{e})$ is only in the embedding $\mc{H} \rw {^LG}$, which changes from $\xi$ to ${^LC}\circ{^L\theta}\circ\xi$.

Then we have \cite[(5.5.1)]{KS12}
\begin{equation} \label{eq:dprod} \Delta_D[\psi,\mf{e},\mf{z_e}]  = \epsilon(V_{G,H},\psi_F) \cdot \Delta_I^\tx{new}[\tx{spl},A] \cdot \Delta_{II}^{-1}[A,X]\cdot \Delta_{III}[\mf{e},\mf{z_e},X]\cdot \Delta_{IV}.\end{equation}
The factor $\epsilon(V_{G,H},\psi_F)$ is the epsilon factor (with Langlands' normalization, see e.g. \cite[(3.6)]{Tat79}) for the virtual $\Gamma$-representation
\[ V_{G,H} = X^*(T)^\theta \otimes \C - X^*(T^H)\otimes \C, \]
where $T^H$ is any quasi-split maximal torus of $H$. It does not depend on any further data.

The factors $\Delta_I$ through $\Delta_{IV}$ depend on most of the objects chosen so far. We have indicated in brackets the more important objects on which they depend, as it will be necessary to keep track of them. These are not all the dependencies. For example, all factors $\Delta_i$ depend on the datum $\mf{e}$, but except for $\Delta_{III}$, this dependence is only through the datum $s$, which we have arranged to be equal for $\mf{e}$ and ${^LC}(\mf{e})$, and so we have not included $\mf{e}$ in the notation for these factors.

We now examine the factors $\Delta_I$ through $\Delta_{IV}$, with the factor $\Delta_{III}$ requiring the bulk of the work. For the factor $\Delta_{IV}$, we have
\begin{equation} \label{eq:d4inv} \Delta_{IV}(\gamma_1^{-1},\theta^{-1}(\delta^{-1})) = \Delta_{IV}(\gamma_1,\delta) \end{equation}
because multiplication by $-1$ on $R(S,G)$ preserves the $\theta$-orbits as well as their type ($R_1$,$R_2$, or $R_3$).

The factor $\Delta_I^\tx{new}[\mf{e},\tx{spl},A]$ does not depend directly on $\gamma_1$ and $\delta$, but rather only on the choices of $S$ and $\phi$. These choices also serve $\gamma_1^{-1}$ and $\theta^{-1}(\delta^{-1})$, and we see
\begin{equation} \label{eq:d1inv} \Delta_I^\tx{new}[\tx{spl},A](\gamma_1^{-1},\theta^{-1}(\delta^{-1})) = \Delta_I^\tx{new}[\tx{spl},A](\gamma_1,\delta). \end{equation}
We turn to $\Delta_{II}[A,X]$. Let $-A$ denote the $a$-data obtained from $A$ by replacing each $a_\alpha$ by $-a_\alpha$. Let $-X$ denote the $\chi$-data obtained from $X$ by replacing each $\chi_\alpha$ by $\chi_\alpha^{-1}$. Then one checks that
\begin{equation} \label{eq:d2inv} \Delta_{II}[A,X](\gamma_1^{-1},\theta^{-1}(\delta^{-1})) = \Delta_{II}[-A,-X](\gamma_1,\delta). \end{equation}

Before we can examine $\Delta_{III}[\mf{e},\mf{z_e},X]$, we need to recall its construction, following \cite[\S 4.4, \S 5.3]{KS99}. We define an $F$-torus $S_1$ as the fiber product
\[ \xymatrix{ S_1\ar[d]\ar[rr]&&S\ar[d]\\ S_{H_1}\ar[r]&S_H\ar[r]^{\phi}&S_\theta } \]
The element $\delta_1=(\gamma_1,\delta)$ belongs to $S_1$. The automorphism $\tx{id} \times \theta$ of $S_{H_1} \times S$ induces an automorphism $\theta_1$ of $S_1$. This automorphism restricts trivially to the kernel of $S_1 \rw S$, and hence $1-\theta_1$ induces a homomorphism $S \rw S_1$, which we can compose with $S_\tx{sc} \rw S$ to obtain a homomorphism $S_\tx{sc} \rw S_1$, which we still denote by $1-\theta_1$.

The element $(\sigma(g)g^{-1},\delta_1)$ belongs to $H^1(F,S_\tx{sc} \stackrel{1-\theta_1}{\lrw} S_1)$ and is called $\tx{inv}(\gamma_1,\delta)$. In \cite[A.3]{KS99}, Kottwitz and Shelstad construct a pairing $\<\>_\tx{KS}$ between the abelian groups $H^1(F,S_\tx{sc} \stackrel{1-\theta_1}{\lrw} S_1)$ and $H^1(W,\hat S_1 \stackrel{1-\hat\theta_1}{\lrw} \hat S_\tx{ad})$. Using this pairing, they define
\[ \Delta_{III}[\mf{e},\mf{z_e},X](\gamma_1,\delta) = \<\tx{inv}(\gamma_1,\delta), A_0[\mf{e},\mf{z_e},X]\>_\tx{KS} \]
where $A_0[\mf{e},\mf{z_e},X]$ is an element of $H^1(F,\hat S_1 \stackrel{1-\hat\theta_1}{\lrw} \hat S_\tx{ad})$ constructed as follows:

The $\chi$-data $X$ provides an $\hat H$-conjugacy class of embeddings ${^LS_H} \rw {^LH}$ and a $\hat G^1$-conjugacy class of embeddings ${^LS_\theta} \rw {^LG^1}$, where $\hat G^1$ is the connected stabilizer of $\hat\theta$. Composing with the canonical embeddings ${^LH} \rw {^LH_1}$ and ${^LG^1} \rw {^LG}$, we obtain embeddings $\xi_1 : {^LS_\theta} \rw {^LG}$ and $\xi_{S_H} : {^LS_H} \rw {^LH_1}$. There is a unique embedding $\xi_1^1 : {^LS} \rw {^LG}$ extending $\xi_1$, and there is a unique embedding $\xi_{S_H}^1 : {^LS_{H_1}} \rw {^LH_1}$ extending $\xi_{S_H}$.

Define $\mc{U} = \{ x \in \mc{H}| \tx{Ad}(\xi(x))|_{\hat T^1} = \tx{Ad}(\xi_1(1 \times \bar x))|_{\hat T^1} \}$. One can show that $\xi(\mc{U}) \subset \xi_1^1({^LS})$ and $\xi_{H_1}(\mc{U}) \subset \xi_{S_H}^1({^LS_{H_1}})$. Then we can define, for any $w \in W$, an element $a_S[X](w) \in \hat S_1$, by choosing a lift $u(w) \in \mc{U}$ and letting $a_S[X](w)=(s_1^{-1},s) \in \hat S_{H_1} \times \hat S \srw \hat S_1$, where $s_1 \in \hat S_{H_1}$ and $s \in \hat S$ are the unique elements satisfying
\begin{equation} \label{eq:xi0} \xi_1^1(s \times w) = \xi(u(w)) \qquad\tx{and}\qquad \xi_{S_H}^1(s_1 \times w) = \xi_{H_1}(u(w)). \end{equation}
We can further define $s_S = [\xi_1^1]^{-1}(s) \in \hat S$ and also view it as an element of $\hat S_\tx{ad}$. Then
\[ A_0[\mf{e},\mf{z_e},X] = (a_S[X]^{-1},s_S) \in H^1(W,\hat S_1 \rw \hat S_\tx{ad}).\]

We are now ready to examine $\Delta_{III}[\mf{e},\mf{z_e},X]$. We have
\begin{equation} \label{eq:invinv} \tx{inv}(\gamma_1^{-1},\theta^{-1}(\delta^{-1})) = (\sigma(g)g^{-1},\theta_1^{-1}(\delta_1^{-1})). \end{equation}
This is an element of $H^1(F,S_\tx{sc} \stackrel{1-\theta_1^{-1}}{\lrw} S_1)$. We have
\begin{eqnarray*}
&&\Delta_{III}[{^LC}(\mf{e}),{^LC^H}(\mf{e_z}),X](\gamma_1^{-1},\theta^{-1}(\delta^{-1}))\\
&=&\<\tx{inv}(\gamma_1^{-1},\theta^{-1}(\delta^{-1})), A_0[{^LC}(\mf{e}),{^LC^H}(\mf{e_z}),X] \>_\tx{KS}.
\end{eqnarray*}
Here $A_0[{^LC}(\mf{e}),{^LC^H}(\mf{e_z}),X]$ is the element of $H^1(W,\hat S_1 \stackrel{1-\hat\theta_1^{-1}}{\lrw} \hat S_\tx{ad})$, constructed as above, but with respect to the endoscopic datum ${^LC}(\mf{e})$ and the z-pair ${^LC^H}(\mf{z_e})$, rather than $\mf{e}$ and $\mf{z_e}$. Thus $A_0[{^LC}(\mf{e}),{^LC^H}(\mf{e_z}),X] = (\tilde a_S[X]^{-1},s_S)$, with $\tilde a_S[X](w)=(\tilde s_1^{-1},\tilde s)$, and
\[ \xi_1^1[X](\tilde s \times w) = {^L\theta}\circ{^LC}\circ\xi(u(w)) \quad\tx{and}\quad \xi_{S_H}^1[X](\tilde s_1 \times w) = {^LC^H}\circ\xi_{H_1}(u(w)).\]
Using Equation \eqref{eq:xi0} we see
\[ \xi_1^1[X](\tilde s \times w) = {^L\theta}\circ{^LC}\circ\xi_1^1[X](s \times w) \quad\tx{and}\quad \xi_{S_H}^1[X](\tilde s_1 \times w) = {^LC^H}\circ\xi_{S_H}^1[X](s_1 \times w).\]
According to Lemma \ref{lem:chiinv} this is equivalent to
\[ \xi_1^1[X](\tilde s \times w) = {^L\theta}\circ\xi_1^1[-X](s^{-1} \times w) \quad\tx{and}\quad \xi_{S_H}^1[X](\tilde s_1 \times w) = \xi_{S_H}^1[-X](s_1^{-1} \times w).\]
We conclude that
\begin{equation} \label{eq:a0inv} \tilde a_S[X](w) = \hat\theta_1(a_S[-X](w)^{-1}). \end{equation}
The isomorphism of complexes
\[ \xymatrix{ S_\tx{sc}\ar[d]_{1-\theta_1^{-1}}\ar[r]&S_\tx{sc}\ar[d]^{1-\theta_1}\\ S_1\ar[r]^{\theta_1\circ(\ )^{-1} }&S_1} \]
induces an isomorphism $H^1(F,S_\tx{sc} \stackrel{1-\theta_1^{-1}}{\lrw} S_1) \rw H^1(F,S_\tx{sc} \stackrel{1-\theta_1}{\lrw} S_1)$ which, according to Equation \eqref{eq:invinv}, sends $\tx{inv}(\gamma_1^{-1},\theta^{-1}(\delta^{-1}))$ to $\tx{inv}(\gamma_1,\delta)$. The dual isomorphism of complexes
\[ \xymatrix{ \hat S_1 \ar[d]_{1-\hat\theta_1^{-1}}&\hat S_1\ar[d]^{1-\hat\theta_1}\ar[l]_{\theta_1\circ(\ )^{-1} }\\ \hat S_\tx{ad}&\hat S_\tx{ad}\ar[l]  } \]
induces an isomorphism  $H^1(W,\hat S_1 \stackrel{1-\hat\theta_1^{-1}}{\lrw} \hat S_\tx{ad}) \rw  H^1(W,\hat S_1 \stackrel{1-\hat\theta_1}{\lrw} \hat S_\tx{ad})$ which, according to Equation \eqref{eq:a0inv}, sends $A_0[\mf{e},\mf{z_e},-X]$ to $A_0[{^LC}(\mf{e}),{^LC^H}(\mf{z_e}),X]$. We conclude
\begin{equation} \label{eq:d3inv} \Delta_{III}[{^LC}(\mf{e}),{^LC^H}(\mf{z_e}),X](\gamma_1^{-1},\theta^{-1}(\delta^{-1})) = \Delta_{III}[\mf{e},\mf{z_e},-X](\gamma_1,\delta). \end{equation}

Combining equations \eqref{eq:dprod}, \eqref{eq:d4inv}, \eqref{eq:d1inv}, \eqref{eq:d2inv}, \eqref{eq:d3inv}, we obtain
\begin{eqnarray*}
&&\Delta_D[\psi,{^LC}(\mf{e}),{^LC^H}(\mf{z_e})](\gamma_1^{-1},\theta^{-1}(\delta^{-1}))\\
\\
&=&\epsilon(V_{G,H},\psi_F)\\
&\cdot&\Delta_I^\tx{new}[\tx{spl},A](\gamma_1^{-1},\theta^{-1}(\delta^{-1}))\\
&\cdot&\Delta_{II}^{-1}[A,X](\gamma_1^{-1},\theta^{-1}(\delta^{-1}))\\
&\cdot&\Delta_{III}[{^LC}(\mf{e}),{^LC^H}(\mf{z_e}),X](\gamma_1^{-1},\theta^{-1}(\delta^{-1}))\\
&\cdot&\Delta_{IV}(\gamma_1^{-1},\theta^{-1}(\delta^{-1}))\\
\\
&=&\epsilon(V_{G,H},\psi_F)\\
&\cdot&\Delta_I^\tx{new}[\tx{spl},A](\gamma_1,\delta)\\
&\cdot&\Delta_{II}^{-1}[-A,-X](\gamma_1,\delta)\\
&\cdot&\Delta_{III}[\mf{e},\mf{z_e},-X](\gamma_1,\delta)\\
&\cdot&\Delta_{IV}(\gamma_1,\delta)\\
\end{eqnarray*}
Since $-X$ and $-A$ are valid choices of $\chi$-data and $a$-data, according to Equation \eqref{eq:dprod} the second product is almost equal to $\Delta_\psi[\psi,\mf{e},\mf{z_e}]$. The only difference is that the $a$-data occurring in $\Delta_I$ is $A$, while the one occurring in  $\Delta_{II}$ is $-A$. Let $-\tx{spl}$ be the splitting $(T,B,\{-X_\alpha\})$. The splitting $-\tx{spl}$ and the character $\psi_F^{-1}$ give rise to the fixed Whittaker datum $(B,\psi)$, just like the splitting $\tx{spl}$ and the character $\psi_F$ did. Then we have
\[ \epsilon(V_{G,H},\psi_F)\cdot\Delta_I[\tx{spl},A] = \epsilon(V_{G,H},\psi_F^{-1})\cdot\Delta_I[-\tx{spl},A] = \epsilon(V_{G,H},\psi_F^{-1})\cdot\Delta_I[\tx{spl},-A], \]
with the first equality following from the argument of \cite[\S 5.3]{KS99}, and the second from Lemma \ref{lem:splcng}. Noting that $\tx{spl}$ and $\psi_F^{-1}$ give rise to the Whittaker datum $(B,\psi^{-1})$, we obtain

\[ \Delta_D[\psi,{^LC}(\mf{e}),{^LC^H}(\mf{z_e})](\gamma_1^{-1},\theta^{-1}(\delta^{-1})) = \Delta_D[\psi^{-1},\mf{e},\mf{z_e}](\gamma_1,\delta). \]

\end{proof}

\begin{cor} \label{cor:deltainv} Let $f \in \mc{H}(G)$ and $f^{H_1} \in \mc{H}(H_1)$ be functions such that the $(\theta^{-1},\omega)$-twisted orbital integrals of $f$ match the stable orbital integrals of $f^{H_1}$ with respect to $\Delta[\psi^{-1},{^LC}(e),{^LC^H}(z_e)]$. Then the $(\theta,\omega)$-twisted orbital integrals of $f\circ\theta^{-1}\circ i$ match the stable orbital integrals of $f^{H_1} \circ i$ with respect to $\tilde\Delta[\psi,e,z_e]$. Here $\tilde\Delta$ stands for any of the two (resp. four) Whittaker normalizations of the transfer factor for twisted (resp. standard) endoscopy, and $i$ is the map on $G(F)$ or $H_1(F)$ sending every element to its inverse.
\end{cor}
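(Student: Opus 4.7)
The plan is to derive this corollary directly from Proposition \ref{pro:deltainv} by performing a change of variables on both sides of the hypothesized matching identity.

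First I would establish two elementary change-of-variable identities for the orbital integrals themselves. On $H_1(F)$, the substitution $h \mapsto h^{-1}$ in the defining integral immediately gives that the stable orbital integral of $f^{H_1}\circ i$ at a strongly $G$-regular $\gamma_1' \in H_1(F)$ equals the stable orbital integral of $f^{H_1}$ at $\gamma_1'^{-1}$. On $G(F)$, the identity $(g\delta'\theta(g^{-1}))^{-1}=\theta(g)\delta'^{-1}g^{-1}$ followed by an application of $\theta^{-1}$ produces the integrand $g\,\theta^{-1}(\delta'^{-1})\,\theta^{-1}(g^{-1})$, so that the $(\theta,\omega)$-twisted orbital integral of $f\circ\theta^{-1}\circ i$ at $\delta'$ equals the $(\theta^{-1},\omega)$-twisted orbital integral of $f$ at $\theta^{-1}(\delta'^{-1})$. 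The character $\omega$ is unchanged between the two setups because it is the character attached to $\tb{a}$ and depends only on $\tb{a}$. Moreover, the $\theta$-centralizer of $\delta'$ coincides with the $\theta^{-1}$-centralizer of $\theta^{-1}(\delta'^{-1})$, so the Haar measures on the quotients agree. In passing I would note, as was already observed at the start of the proof of Proposition \ref{pro:deltainv}, that $\gamma$ is a $\theta$-norm of $\delta'$ iff $\gamma^{-1}$ is a $\theta^{-1}$-norm of $\theta^{-1}(\delta'^{-1})$; hence $\delta' \mapsto \theta^{-1}(\delta'^{-1})$ is a norm-compatible bijection between the two sets of relevant conjugacy classes.

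With these in hand, I would take the hypothesized matching identity at $\gamma_1 = \gamma_1'^{-1}$, reindex the resulting sum on the right by the substitution $\delta = \theta^{-1}(\delta'^{-1})$, and then replace in each term the transfer factor $\Delta[\psi^{-1},{^LC}(\mf{e}),{^LC^H}(\mf{z_e})](\gamma_1'^{-1},\theta^{-1}(\delta'^{-1}))$ by $\Delta[\psi,\mf{e},\mf{z_e}](\gamma_1',\delta')$ using Proposition \ref{pro:deltainv}, and the twisted orbital integral of $f$ at $\theta^{-1}(\delta'^{-1})$ by the twisted orbital integral of $f\circ\theta^{-1}\circ i$ at $\delta'$ using the identity above. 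The left hand side is converted by the first identity. The resulting equality is exactly the desired matching for $f\circ\theta^{-1}\circ i$ and $f^{H_1}\circ i$ with respect to $\Delta[\psi,\mf{e},\mf{z_e}]$. The claim for the remaining Whittaker normalizations (and in the ordinary-endoscopy case for $\Delta$ and $\Delta'_D$) follows by the identical argument, since Proposition \ref{pro:deltainv} is valid in each of those cases.

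The only real obstacle is bookkeeping: verifying that the substitution $\delta' \mapsto \theta^{-1}(\delta'^{-1})$ is a well-defined, measure-preserving, norm-compatible bijection between the two sets of twisted conjugacy classes, and that the $\omega$-weight and the Haar measures truly pass through with no spurious Jacobian. No new conceptual input beyond Proposition \ref{pro:deltainv} and the definitions is required.
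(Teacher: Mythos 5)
Your argument is correct and follows essentially the same route as the paper's proof: first convert the stable orbital integral of $f^{H_1}\circ i$ at $\gamma_1$ to that of $f^{H_1}$ at $\gamma_1^{-1}$, invoke the hypothesized matching, use the change of variables $\delta'=\theta(\delta^{-1})$ to rewrite the twisted orbital integral of $f$ as one of $f\circ\theta^{-1}\circ i$, and finally apply Proposition \ref{pro:deltainv} to convert the transfer factor. (Your extra remarks on norm-compatibility of $\delta'\mapsto\theta^{-1}(\delta'^{-1})$, measure preservation, and the invariance of $\omega$ are all correct but left implicit in the paper; note also that the paper's displayed final line appears to have a typo writing $\gamma_1^{-1}$ where $\gamma_1$ should stand, and your version is the intended one.)
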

\begin{proof}
\begin{eqnarray*}
&&SO(\gamma_1,f^{H_1}\circ i)\\
&=&SO(\gamma_1^{-1},f^{H_1})\\
&=&\sum_{\delta \in G(F)/\theta^{-1}-\sim}\hspace{-20pt}\tilde\Delta[\psi^{-1},{^LC}(e),{^LC^H}(z_e)](\gamma_1^{-1},\delta)O^{\theta^{-1},\omega}(\delta,f)\\
&=&\sum_{\delta \in G(F)/\theta^{-1}-\sim}\hspace{-20pt}\tilde\Delta[\psi^{-1},{^LC}(e),{^LC^H}(z_e)](\gamma_1^{-1},\delta)O^{\theta,\omega}(\theta(\delta^{-1}),f\circ\theta^{-1}\circ i)\\
&=&\sum_{\delta' \in G(F)/\theta-\sim}\hspace{-20pt}\tilde\Delta[\psi,e,z_e](\gamma_1^{-1},\delta')O^{\theta,\omega}(\delta',f\circ\theta^{-1}\circ i)\\
\end{eqnarray*}
The last line follows from Proposition \ref{pro:deltainv}, with the substitution $\delta'=\theta(\delta^{-1})$.
\end{proof}

\begin{fct} \label{fct:dualgen} Let $\pi$ be an irreducible admissible tempered $\theta$-stable representation of $G(F)$, and let $A: \pi \cong \pi\circ\theta$ be the unique isomorphism which preserves a $(B,\psi)$-Whittaker functional. Then the dual map $A^\vee : (\pi\circ\theta)^\vee \rw \pi^\vee$ preserves a $(B,\psi^{-1})$-Whittaker functional.
\end{fct}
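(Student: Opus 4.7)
The plan is to reduce the statement to verifying that a certain scalar equals one, and then pin that scalar down by coupling $\lambda$ to the sought-after $\ell$ through the canonical Whittaker-model pairing between $\pi$ and $\pi^\vee$.

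First, I observe that the $\theta$-stability of $(B,\psi)$ implies the $\theta$-stability of $(B,\psi^{-1})$, so a $(B,\psi^{-1})$-Whittaker functional $\ell$ on $\pi^\vee$ is simultaneously a $(B,\psi^{-1})$-Whittaker functional on $(\pi\circ\theta)^\vee = \pi^\vee\circ\theta$. Since $A^\vee$ intertwines $(\pi\circ\theta)^\vee$ with $\pi^\vee$, the composition $\ell\circ A^\vee$ is another $(B,\psi^{-1})$-Whittaker functional on $(\pi\circ\theta)^\vee$, and Rodier's multiplicity-one theorem forces $\ell\circ A^\vee = c\,\ell$ for some $c\in\C^\times$. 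The problem reduces to showing $c=1$.

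Next, I pass to Whittaker models: realize $\pi$ in $C^\infty(U(F)\backslash G(F),\psi)$ via $v\mapsto W_v(g) = \lambda(\pi(g)v)$. Combining $\lambda\circ A = \lambda$ with the intertwining identity $\pi(h)A = A\pi(\theta^{-1}(h))$ gives $W_{A(v)} = W_v\circ\theta^{-1}$, and the parallel calculation for $\pi^\vee$ produces $W^\vee_{A^\vee\mu} = c\,(W^\vee_\mu\circ\theta)$, with the factor $c$ coming from $\ell\circ A^\vee = c\,\ell$. I couple the two sides via the canonical Whittaker pairing
\[
\mu(v) = k\int_{U(F)\backslash G(F)}^{\tx{st}} W_v(g)\,W^\vee_\mu(g)\,dg,\qquad k\in\C^\times,
\]
realized as a stable/regularized integral (Lapid--Mao in the $p$-adic case, Harish-Chandra Schwartz-space theory in the real case). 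Since $\theta$ stabilizes the splitting, it acts on $\tx{Lie}(U)$ through a permutation of root spaces and hence preserves Haar measure on $U(F)\backslash G(F)$, so the change of variables $g\mapsto\theta(g)$ combined with the two Whittaker-model identities gives
\[
\mu(A(v)) = k\int W_v(\theta^{-1}(g))\,W^\vee_\mu(g)\,dg = k\int W_v(g)\,W^\vee_\mu(\theta(g))\,dg = c^{-1}\,(A^\vee\mu)(v).
\]
But $(A^\vee\mu)(v) = \mu(A(v))$ tautologically, and choosing $v,\mu$ with $\mu(A(v))\neq 0$ (possible by irreducibility of $\pi^\vee$ and invertibility of $A$) forces $c=1$.

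The principal difficulty is the existence and invariance of the canonical pairing in the displayed form: on a general tempered $\pi$ the integrand $W_v W^\vee_\mu$ is not in $L^1(U(F)\backslash G(F))$, so one must employ a stable/regularized integral and check that it retains enough $G(F)$-equivariance for the change of variables $g\mapsto\theta(g)$ to be legitimate. Once that regularization is in place, the computation above closes the argument immediately.
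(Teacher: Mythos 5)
Your proof is correct in outline but takes a genuinely different route from the paper's, and leans on heavier analytic machinery. The paper's argument is purely algebraic: it uses the unitarity of $\pi$ (tempered $\Rightarrow$ unitary), fixes a $\pi$-invariant Hermitian form $\<\cdot,\cdot\>$, and observes that $A$ is unitary for it via the observation that the scalar relating $\<Av,Aw\>$ to $\<v,w\>$ is simultaneously a positive real number (both forms are Hermitian) and a root of unity ($A$ has finite order), hence equals $1$. The conjugate-linear isomorphism $\bar V \rw V^\vee$, $w\mapsto\<\cdot,w\>$, then identifies $A^\vee$ with $A^* = A^{-1}$ and carries a $(B,\psi)$-Whittaker functional $\lambda$ to the $(B,\psi^{-1})$-Whittaker functional $\sigma\circ\lambda$ on $\bar V\cong V^\vee$, which is preserved by $A^{-1}$ because $\lambda$ is preserved by $A$. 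Nothing about Whittaker models or integrals enters.

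Your multiplicity-one reduction (that $\ell\circ A^\vee = c\ell$ for a scalar $c$, and the task is to show $c=1$) is a clean first step. But closing it with the regularized inner-product formula $\mu(v)=k\int^{\mathrm{st}}W_v W^\vee_\mu$ requires three things: existence of the regularization for all tempered $\pi$ of all quasi-split reductive groups over $\R$ and over $p$-adic fields; nonvanishing so that it realizes the canonical pairing up to a nonzero scalar; and equivariance of the regularization under the change of variables $g\mapsto\theta(g)$ (i.e., the regularization must itself be natural with respect to Haar-measure-preserving automorphisms of $U(F)\backslash G(F)$). These are far from formal, and in the real case especially, the full package is not available off the shelf in this generality -- you correctly flag this as the principal difficulty. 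In the context where the paper actually applies this Fact ($G=\tx{GL}_n$), the analytic ingredients are classical, but the statement is made for general $G$. The paper's unitarity argument is both shorter and robust across all the cases required. If you want to preserve your multiplicity-one reduction, a cleaner way to nail $c=1$ is to import exactly the paper's observation: $A$ is unitary, so $A^\vee = A^{-1}$ under the Hermitian identification, and then $\ell\circ A^\vee = \ell\circ A^{-1} = \ell$ directly, with no integral needed.
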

\begin{proof} Let $V$ be the vector space on which $\pi$ acts. Since $\pi$ is tempered, it is unitary. Let $\<\cdot,\cdot\>$ be a $\pi$-invariant non-degenerate Hermitian form on $V$. Then
\[ \ol{V} \rw V^\vee,\quad w \mapsto \<\cdot,w\> \]
is a $\pi$-$\pi^\vee$-equivariant isomorphism, and it identifies $A$ with its $\<\cdot,\cdot\>$-adjoint $A^*$. But $A^*=A^{-1}$. Indeed, $(v,w) \mapsto \<Av,Aw\>$ is another $\pi$-invariant scalar product, hence there exists a scalar $c \in \C^\times$ with $\<Av,Aw\>=c\<v,w\>$. On the one hand, since both sides are Hermitian, this scalar must belong to $\R_{>0}$. On the other hand, since $A$ has finite order, $c$ must be a root of unity. Thus $c=1$, which shows $A^*=A^{-1}$. Let $\sigma$ denote complex conjugation. If $\lambda : V \rw \C$ is a $(B,\psi)$-Whittaker functional preserved by $A$, then $\sigma\circ\lambda : \ol{V} \rw \C$ is a $(B,\psi^{-1})$-Whittaker functional preserved by $A^\vee=A^*=A^{-1}$.
\end{proof}

\begin{cor} \label{cor:dualgen} If $\tilde\pi$ is the unique extension of $\pi$ to a representation of $G(F) \rtimes \<\theta\>$ so that $\tilde\pi(\theta)$ is the isomorphism $\pi \rw \pi\circ\theta$ which fixes a $(B,\psi)$-Whittaker functional, then $\tilde\pi^\vee$ is the unique extension of $\pi^\vee$ to a representation of $G(F) \rtimes \<\theta\>$ so that $\tilde\pi^\vee(\theta)$ is the isomorphism $\pi^\vee \rw \pi^\vee\circ\theta$ which fixes a $(B,\psi^{-1})$-Whittaker functional.
\end{cor}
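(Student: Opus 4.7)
The plan is to derive the corollary as a direct consequence of Fact \ref{fct:dualgen} by unwinding the definition of the contragredient on the semi-direct product $G(F) \rtimes \<\theta\>$. Set $A := \tilde\pi(\theta) : V \rw V$, so that $A$ intertwines $\pi$ with $\pi \circ \theta$ and, by hypothesis, fixes a $(B,\psi)$-Whittaker functional. The standard definition of the contragredient gives $\tilde\pi^\vee(g) = \tilde\pi(g^{-1})^t$ for every $g \in G(F) \rtimes \<\theta\>$, where $T \mapsto T^t$ denotes the transpose of an operator on $V$. In particular
\[
\tilde\pi^\vee(\theta) = \tilde\pi(\theta^{-1})^t = (A^{-1})^t = (A^\vee)^{-1},
\]
where $A^\vee$ is the transpose map appearing in Fact \ref{fct:dualgen}.

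Fact \ref{fct:dualgen} then tells us that $A^\vee : (\pi\circ\theta)^\vee \rw \pi^\vee$ preserves a $(B,\psi^{-1})$-Whittaker functional $\Lambda$ on $V^\vee$. Since multiplicity one for generic representations forces the space of such functionals to be at most one-dimensional, ``preserves'' amounts to the equality $A^\vee \Lambda = \Lambda$, and hence also $(A^\vee)^{-1}\Lambda = \Lambda$. Therefore $\tilde\pi^\vee(\theta) = (A^\vee)^{-1}$ is the intertwining operator $\pi^\vee \cong \pi^\vee\circ\theta$ which fixes a $(B,\psi^{-1})$-Whittaker functional, which is precisely what the corollary asserts.

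Uniqueness of the extension is automatic: once the value $\tilde\pi^\vee(\theta)$ is specified, the representation of $G(F)\rtimes\<\theta\>$ is completely determined by $\tilde\pi^\vee(g\,\theta^k) = \pi^\vee(g)\,\tilde\pi^\vee(\theta)^k$, and the Whittaker-compatibility condition pins down this intertwiner uniquely among its nonzero scalar multiples. No serious obstacle arises; the only bookkeeping to attend to is the direction of the intertwiner, which reverses once under duality and is then reversed again by the $\theta^{-1}$ in the formula for the contragredient, accounting for the clean equality $\tilde\pi^\vee(\theta)=(A^\vee)^{-1}$ used above.
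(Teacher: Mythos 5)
Your argument is correct, and the paper offers no explicit proof of the corollary — it is stated as an immediate consequence of Fact \ref{fct:dualgen} — so there is nothing to diverge from. Your unwinding is exactly what is implicit: setting $A = \tilde\pi(\theta)$, using $\tilde\pi^\vee(\theta) = \tilde\pi(\theta^{-1})^t = (A^\vee)^{-1}$, observing that $(\pi\circ\theta)^\vee = \pi^\vee\circ\theta$ so that $(A^\vee)^{-1}$ is indeed an intertwiner $\pi^\vee \to \pi^\vee\circ\theta$, and noting that $A^\vee\Lambda = \Lambda$ forces $(A^\vee)^{-1}\Lambda = \Lambda$. The appeal to one-dimensionality of the Whittaker-functional space to pin the normalization is also the right way to justify the word ``unique'' in the statement. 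One small point worth keeping in mind (though it does not affect the argument): when one says $A^\vee$ ``preserves'' a Whittaker functional $\Lambda$, one means $\Lambda\circ A^\vee = \Lambda$ as functionals on $V^*$; under the identification $\overline{V}\cong V^\vee$ used in the proof of Fact \ref{fct:dualgen}, $A^\vee$ corresponds to $A^{-1}$, and the fixed functional is $\sigma\circ\lambda$. Your step from $A^\vee\Lambda=\Lambda$ to $(A^\vee)^{-1}\Lambda=\Lambda$ is valid under either reading.
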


The next statement is a weaker version of \cite[Thm. 7.1(a)]{AV12}. That theorem states that for any Langlands parameter $\phi : W' \rw {^LG}$ for a real connected reductive group $G$ with corresponding $L$-packet $\Pi_\phi$, the set $\{\pi^\vee|\pi \in \Pi_\phi\}$ is also an $L$-packet, and its parameter is ${^LC}\circ\phi$. Assume that $\phi$ is tempered, and denote by $S\Theta_\phi$ the stable character of the $L$-packet $\Pi_\phi$. Then an immediate corollary of the result of Adams and Vogan is that $S\Theta_\phi\circ i = S\Theta_{{^LC}\circ\phi}$. We will now prove this equality for quasi-split symplectic and special orthogonal $p$-adic groups. After that, we will use it to derive Formula \eqref{eq:expcont}. With this formula at hand, we will then derive the precise $p$-adic analog of \cite[Thm. 7.1(a)]{AV12} as a corollary.

\begin{thm} \label{thm:stabcont} Let $H$ be a quasi-split symplectic or special orthogonal group and $\phi : W' \rw {^LH}$ a tempered Langlands parameter. Write $S\Theta_\phi$ for the stable character of the $L$-packet attached to $\phi$. Then we have an equality of linear forms on $\mc{\tilde H}(H)$
\[ S\Theta_{\phi}\circ i = S\Theta_{{^LC^H}\circ\phi}. \]
\end{thm}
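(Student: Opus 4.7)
The plan is to reduce the identity to Arthur's characterization \cite{Ar11} of $S\Theta_\phi$ as the twisted endoscopic transfer, from $\tx{GL}_N$ to $H$, of the twisted character of a self-dual representation of $\tx{GL}_N(F)$, and then to combine Proposition \ref{pro:deltainv} with Corollary \ref{cor:dualgen} in order to convert the Chevalley-involution symmetry of transfer factors and Whittaker normalizations into the desired symmetry of stable characters on $H$.

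First I would realize $H$ as a twisted endoscopic group of $G = \tx{GL}_N$ equipped with its standard pinned outer involution $\theta$, and fix a $\theta$-stable Whittaker datum $(B_G,\psi_G)$, a representative $\mf{e}=(H,\mc{H},s,\xi)$ of the endoscopic datum, and a $z$-pair $\mf{z_e}=(H_1,\xi_{H_1})$. The parameter $\phi$ then has a self-dual lift $\phi_G$ to a parameter of $G$, producing a self-dual tempered representation $\pi$ of $G(F)$; let $\tilde\pi$ be its extension to $G(F)\rtimes\<\theta\>$ fixing a $(B_G,\psi_G)$-Whittaker functional. Arthur's twisted character identity gives
\[ S\Theta_\phi(f^H) = \tilde\Theta_{\tilde\pi}(f) \]
whenever $f\in\mc{H}(G)$ and $f^H\in\mc{\tilde H}(H)$ have orbital integrals matching via $\Delta_D[\psi_G,\mf{e},\mf{z_e}]$. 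The parameter of the contragredient $\pi^\vee$ is ${^LC^G}\circ\phi_G$, and since on $\tx{GL}_N$ the Chevalley involution coincides with $\hat\theta$ modulo inner automorphisms, $\pi^\vee$ is identified with the self-dual representation of $G$ attached, via the modified data ${^LC}(\mf{e})$ and ${^LC^H}(\mf{z_e})$, to the $H$-parameter ${^LC^H}\circ\phi$. By Corollary \ref{cor:dualgen} the extension of $\pi^\vee$ fixing a $(B_G,\psi_G^{-1})$-Whittaker functional is the contragredient $\tilde\pi^\vee$, so reapplying Arthur's identity yields
\[ S\Theta_{{^LC^H}\circ\phi}(f^H) = \tilde\Theta_{\tilde\pi^\vee}(f_1) \]
whenever $f_1\in\mc{H}(G)$ and $f^H\in\mc{\tilde H}(H)$ match via $\Delta_D[\psi_G^{-1},{^LC}(\mf{e}),{^LC^H}(\mf{z_e})]$.

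To link the two identities, given $f^H\in\mc{\tilde H}(H)$, pick $f_1$ matching it as above. Corollary \ref{cor:deltainv} then asserts that $f_1\circ\theta^{-1}\circ i$ and $f^H\circ i$ match via $\Delta_D[\psi_G,\mf{e},\mf{z_e}]$, whence
\[ S\Theta_\phi(f^H\circ i) = \tilde\Theta_{\tilde\pi}(f_1\circ\theta^{-1}\circ i). \]
A direct manipulation, using only $\tilde\Theta_{\tilde\pi^\vee}(x)=\tilde\Theta_{\tilde\pi}(x^{-1})$ and the semidirect-product identity $(g\theta)^{-1}=\theta(g^{-1})\theta$ inside $G(F)\rtimes\<\theta\>$, rewrites the right-hand side as $\tilde\Theta_{\tilde\pi^\vee}(f_1)$. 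By the previous display this equals $S\Theta_{{^LC^H}\circ\phi}(f^H)$, giving the claimed equality of linear forms on $\mc{\tilde H}(H)$.

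The main obstacle is the identification, after Chevalley conjugation of the endoscopic data, of the representation $\pi^\vee$ of $G$ with the one attached to the $H$-parameter ${^LC^H}\circ\phi$ via ${^LC}(\mf{e})$ and ${^LC^H}(\mf{z_e})$. This amounts to tracking how the $L$-embeddings $\xi$ and $\xi_{H_1}$ interact with the Chevalley involutions on ${^LG}$ and ${^LH_1}$, which is essentially the torus-level content of Lemma \ref{lem:chiinv} transported to the full endoscopic setting. A secondary point is the even orthogonal case: because $f^H\in\mc{\tilde H}(H)$ is by definition $\tx{\tilde Out}_N(G)$-invariant, the pair of potential $L$-packets $\Pi_{\phi,1},\Pi_{\phi,2}$ that Arthur attaches to $\phi$ cannot be separated on this subalgebra, so the argument descends without modification.
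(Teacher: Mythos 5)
Your overall architecture matches the paper's proof exactly: invoke Arthur's characterization of $S\Theta_\phi$ as a twisted transfer from $\tx{GL}_N$, identify the $\tx{GL}_N$-parameter of $\pi^\vee$ with the one attached (after Chevalley conjugation) to ${^LC^H}\circ\phi$, use Corollary \ref{cor:dualgen} for the normalization of the extensions, and Corollary \ref{cor:deltainv} for the change in transfer factors, then run the characterizing property backwards. The trace manipulation you sketch with $\tilde\Theta_{\tilde\pi^\vee}(x)=\tilde\Theta_{\tilde\pi}(x^{-1})$ and $(g\theta)^{-1}=\theta(g^{-1})\theta$ is correct (using $\theta^2=1$) and is a legitimate rephrasing of the paper's computation with the operator $A$.

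The one spot where your proposal is materially incomplete is exactly the point you flag as ``the main obstacle'': the claim that, up to equivalence, replacing $(\mf{e},\mf{z_e})$ by $({^LC}(\mf{e}),{^LC^H}(\mf{z_e}))$ leaves Arthur's characterization unchanged, so that the $\tx{GL}_N$-side of the reapplied identity is genuinely the contragredient $\pi^\vee$. You suggest this is ``essentially the torus-level content of Lemma \ref{lem:chiinv} transported to the full endoscopic setting,'' but that is not how the paper argues, and Lemma \ref{lem:chiinv} by itself is not sufficient: it concerns embeddings of tori, not the equivalence of twisted endoscopic data. What is actually needed, and what the paper supplies, are two concrete facts special to the $\tx{GL}_N$ twisted setup. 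First, ${^L\theta}\circ\xi=\tx{Int}(s^{-1})\circ\xi$, which lets you absorb the $^L\theta$ in ${^LC}(\mf{e})$ into an inner automorphism. Second, since $\hat C$ and $\hat C^H$ are both chosen to preserve the standard diagonal tori of $\hat G$ and $\hat H$ and to act by inversion there, one has ${^LC}\circ\xi\circ({^LC^H})^{-1}=\tx{Int}(t)\circ\xi$ for some $t\in\hat T$; this is what lets you absorb the $H$-side Chevalley involution into the $\tx{GL}_N$-side one and conclude that the modified datum is equivalent to $(\mf{e},\mf{z_e})$ while the $\tx{GL}_N$-parameter becomes $\hat G$-conjugate to ${^LC}\circ\xi\circ\phi$, i.e., the parameter of $\pi^\vee$. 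Without this explicit identification the reapplication of Arthur's identity is not justified, so you should replace the appeal to Lemma \ref{lem:chiinv} by this direct computation. (Also, the group acting on $\mc{\tilde H}(H)$ is $\tx{\tilde Out}_N(H)$, not $\tx{\tilde Out}_N(G)$; for $G=\tx{GL}_N$ one has $\mc{\tilde H}(G)=\mc{H}(G)$.)
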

\begin{proof}
We recall very briefly the characterizing property of $S\Theta_\phi$, following Arthur \cite[\S1,\S2]{Ar11}.
The group $H$ can be extended to an endoscopic datum $\mf{e}=(H,\mc{H},s,\xi)$ for the triple $(G,\theta,1)$, where $G=\tx{GL}_n$ for a suitable $n$ depending on $H$ and $\theta$ is an outer automorphism of $G$ preserving the standard splitting. Then $\xi\circ\phi$ is a Langlands parameter for $G$. Let $\pi$ be the representation of $G(F)$ assigned to $\phi$ by the local Langlands correspondence \cite{HT01}, \cite{He00}. We have $\pi \cong \pi\circ\theta$. Choose an additive character $\psi_F : F \rw \C^\times$, and let \tx{spl} be the standard splitting of $G$. Then we obtain a Whittaker datum $(B,\psi)$. There is a unique isomorphism $A : \pi \rw \pi\circ\theta$ which preserves one (hence all) $(B,\psi)$-Whittaker functionals. Then we have the distribution
\[ f \mapsto T\Theta_{\xi\circ\phi}^\psi(f) = \tx{tr}\left( v \mapsto \int_{G(F)} f(g)\pi(g)A v dg \right). \]
By construction, $S\Theta_\phi$ is the unique stable distribution on $\mc{\tilde H}(H)$ with the property that
\[ S\Theta_\phi(f^H) = T\Theta_{\xi\circ\phi}^\psi(f) \]
for all $f \in \mc{\tilde H}(G)$ and $f^H \in \mc{\tilde H}(H)$ such that the $(\theta,\omega)$-twisted orbital integrals of $f$ match the stable orbital integrals of $f^H$ with respect to $\Delta'[\psi,\mf{e},\mf{z_e}]$. Here $\mf{z_e}$ stands for the pair $(H,\xi_{H_1})$, where $\xi_{H_1}$ is a suitably chosen isomorphism $\mc{H} \rw {^LH}$ \cite[\S1]{Ar11}.

Now consider the transfer factor $\Delta'[\psi^{-1},{^LC}(\mf{e}),{^LC^H}(\mf{z_e})]$. We have chosen both $\hat C$ and $\hat C^H$ to preserve the standard diagonal torus and act as inversion on it. Moreover the endoscopic element $s$ belongs to that torus. The using ${^LC}(\mf{e}),{^LC^H}(\mf{z_e})$ has the same effect as using $(H,\mc{H},s,{^LC}\circ{^L\theta}\circ\xi\circ{^LC^H}^{-1})$ and the z-pair $\mf{z_e}$. We have ${^L\theta}\circ\xi = \tx{Int}(s^{-1})\xi$, so replacing ${^L\theta}\circ\xi$ by $\xi$ changes the above datum to an equivalent one. In the same way, we have ${^LC}\circ\xi\circ{^LC^H}^{-1}=\tx{Int}(t)\circ\xi$ for some $t \in \hat T$. All in all, up to equivalence, we see that replacing $\mf{e}$ and $\mf{z_e}$ by ${^LC}(\mf{e})$ and ${^LC^H}(\mf{z_e})$ has no effect, and we have obtained the transfer factor $\Delta'[\psi^{-1},\mf{e},\mf{z_e}]$, which we from now on abbreviate to $\Delta'[\psi^{-1}]$.

We have $S\Theta_{^LC^H\circ\phi}(f^H)=T\Theta_{\xi\circ{^LC^H}\circ\phi}(f)$. As we just argued, $\xi\circ{^LC^H}$ is $\hat G$-conjugate to ${^LC}\circ\xi$. Thus, the Galois-representation ${\xi\circ{^LC^H}\circ\phi}$ is the contragredient to the Galois-representation $\xi\circ\phi$. As the local Langlands correspondence for $\tx{GL}_n$ respects the operation of taking the contragredient, Corollary \ref{cor:dualgen} implies
\[ T\Theta_{\xi\circ{^LC^H}\circ\phi}^\psi(f) = T\Theta_{\xi\circ\phi}^{\psi^{-1}}(f\circ\theta^{-1}\circ i). \]
By construction of $S\Theta_{\xi\circ\phi}$, we have
\[ T\Theta_{\xi\circ\phi}^{\psi^{-1}}(f\circ\theta^{-1}\circ i) = S\Theta_{\xi\circ\phi}('f^H) \]
whenever $'f^H$ is an element of $\mc{\tilde H}(H)$ whose stable orbital integrals match the $(\theta,1)$-orbital integrals of $f\circ\theta^{-1}\circ i$ with respect to $\Delta'[\psi^{-1}]$. By Corollary \ref{cor:deltainv}, $f^H\circ i$ is such a function, and we see that the distribution $f \mapsto S\Theta_{\xi\circ\phi}(f^H\circ i)$ satisfies the property that characterizes $S\Theta_{^LC_H\circ\phi}$, hence must be equal to the latter.

\end{proof}

\begin{thm} \label{thm:cont}
Let $G$ be a quasi-split connected reductive real $K$-group or a quasi-split symplectic or special orthogonal $p$-adic group, and let $(B,\psi)$ be a Whittaker datum. Let $\phi : W' \rw {^LG}$ be a tempered Langlands parameter, and $\rho$ an irreducible representation of $C_\phi$. Then
\[ \iota_{B,\psi}(\phi,\rho)^\vee = \iota_{B,\psi^{-1}}({^LC}\circ\phi,[\rho\circ\hat C^{-1}]^\vee). \]
\end{thm}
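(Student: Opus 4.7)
I will evaluate both sides of the identity against an arbitrary test function $f \in \mc{\tilde H}(G)$ and show that the resulting distributions on $G(F)$ agree. The left-hand side, via the tautology $\Theta_{\pi^\vee}(f) = \Theta_\pi(f\circ i)$, equals $\Theta_{\iota_{B,\psi}(\phi,\rho)}(f\circ i)$. Fourier-inverting the Arthur--Shelstad character identity recalled in Section \ref{sec:recall} over the finite abelian group $C_\phi$ expresses this as
\[ |C_\phi|^{-1}\sum_{s \in C_\phi} \<s,\rho\>\, S\Theta_{\phi_s}(g^s), \]
where, for each semisimple $s$ giving rise to the endoscopic datum $\mf{e}_s$ and $z$-pair $\mf{z}_{\mf{e}_s}$, the function $g^s \in \mc{\tilde H}(H_1)$ has orbital integrals matching those of $f\circ i$ with respect to $\Delta[\psi,\mf{e}_s,\mf{z}_{\mf{e}_s}]$.

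The key step is to chain together the tools built up in Section \ref{sec:cont}. Corollary \ref{cor:deltainv}, applied to ordinary endoscopy ($\theta=1$, $\omega=1$), allows me to choose $g^s = h^s \circ i$ where $h^s \in \mc{\tilde H}(H_1)$ matches $f$ with respect to $\Delta[\psi^{-1},{^LC}(\mf{e}_s),{^LC^H}(\mf{z}_{\mf{e}_s})]$. Theorem \ref{thm:stabcont}, applied to $H_1$, then converts $S\Theta_{\phi_s}(h^s \circ i)$ into $S\Theta_{{^LC^H}\circ\phi_s}(h^s)$. Unwinding the Section \ref{sec:recall} recipe, $^LC^H\circ\phi_s$ is precisely the Langlands parameter that the recipe attaches to the pair $({^LC}\circ\phi,\, \hat C(s^{-1}))$ when one uses the endoscopic datum ${^LC}(\mf{e}_s)$ together with the $z$-pair ${^LC^H}(\mf{z}_{\mf{e}_s})$. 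The forward character identity on the $^LC\circ\phi$-side, now with Whittaker datum $(B,\psi^{-1})$, therefore gives
\[ S\Theta_{{^LC^H}\circ\phi_s}(h^s) \;=\; \sum_{\rho'\in \tx{Irr}(C_{{^LC}\circ\phi})} \<\hat C(s^{-1}),\rho'\>\, \Theta_{\iota_{B,\psi^{-1}}({^LC}\circ\phi,\rho')}(f). \]

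Substituting back and exchanging the sums over $s$ and $\rho'$, the coefficient of $\Theta_{\iota_{B,\psi^{-1}}({^LC}\circ\phi,\rho')}(f)$ becomes
\[ |C_\phi|^{-1}\sum_{s \in C_\phi} \<s,\rho\>\,\<\hat C(s^{-1}),\rho'\>. \]
Using the isomorphism $\hat C : C_\phi \cong C_{{^LC}\circ\phi}$ together with the elementary manipulation converting $\<\hat C(s^{-1}),\rho'\>$ into a pairing on $C_\phi$ involving $\rho'\circ\hat C^{-1}$ and the contragredient, Schur orthogonality forces this coefficient to vanish unless $\rho' = (\rho\circ\hat C^{-1})^\vee$, picking out exactly the term claimed in the theorem.

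The main technical obstacle is the parameter identification in the middle step: checking that, after replacing $\xi$ by $^LC\circ\xi$ and $\xi_{H_1}$ by $^LC^H\circ\xi_{H_1}$ in the construction of the associated parameter on $H_1$, the recipe of Section \ref{sec:recall} attaches to $({^LC}\circ\phi,\, \hat C(s^{-1}))$ exactly the composite $^LC^H\circ\phi_s$. This is straightforward but requires care in tracing through the definitions of ${^LC}(\mf{e})$ and ${^LC^H}(\mf{z_e})$ and the asymmetric presence of the inverse in $\hat C(s^{-1})$. Once this is in place, the argument is purely formal, and the real and $p$-adic classical cases are handled uniformly, since Theorem \ref{thm:stabcont} has an immediate real analog (as indicated in the paragraph preceding its statement, using \cite{AV12}) and Corollary \ref{cor:deltainv} applies over any local field of characteristic zero.
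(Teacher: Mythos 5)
Your proposal follows the same overall architecture as the paper's proof: evaluate $\Theta_{\pi^\vee}$ as $\Theta_\pi\circ i$, feed $f\circ i$ into the inversion of endoscopic transfer, invoke Corollary \ref{cor:deltainv} to rewrite $[f\circ i]^{s,\psi}$ in terms of the $(B,\psi^{-1})$-transfer of $f$, use Theorem \ref{thm:stabcont} for the factor groups $H_1$, and then use the identification $[{^LC}\circ\phi]_{s'} = {^LC^H}\circ\phi_s$ with $s' = \hat C(s^{-1})$. The only real divergence is your final bookkeeping step: where the paper directly recognizes the resulting $s$-sum as the inversion formula applied to $\iota_{B,\psi^{-1}}({^LC}\circ\phi,[\rho\circ\hat C^{-1}]^\vee)$ and stops, you re-expand each $S\Theta_{[{^LC}\circ\phi]_{s'}}(f^{s',\psi^{-1}})$ via the forward character identity and then apply Schur orthogonality to isolate one $\rho'$. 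This is a small detour; the paper's version avoids the double sum and is slightly cleaner, but the ideas and lemmas used are the same.

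One caution on your last step: with the pairing convention $\<s,\rho\> = \tr\rho(s)$ written in Section \ref{sec:recall}, the literal Schur computation on $|C_\phi|^{-1}\sum_s\<s,\rho\>\<\hat C(s^{-1}),\rho'\>$ isolates $\rho' = \rho\circ\hat C^{-1}$, not $(\rho\circ\hat C^{-1})^\vee$. The two coincide here because $C_\phi$ is an elementary abelian $2$-group for all the groups covered by the theorem (indeed the paper's own forward and inverse formulas, both written with $\<s,\rho\>$ rather than one of them with $\<s,\rho\>^{-1}$, tacitly rely on this). You should either note explicitly that $\rho^\vee = \rho$ for these $C_\phi$, or use the honest Fourier inversion with $\<s,\rho\>^{-1}$, in which case Schur orthogonality does produce $(\rho\circ\hat C^{-1})^\vee$ as stated. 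As written, the assertion ``Schur orthogonality forces $\rho' = (\rho\circ\hat C^{-1})^\vee$'' does not follow from the displayed coefficient without one of these remarks.
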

\begin{proof}
Put $\pi=\iota_{B,\psi}(\phi,\rho)$. For each semi-simple $s \in S_\phi$, let $\mf{e}_s = (H,\mc{H},s,\xi)$ be the corresponding endoscopic datum (see Section \ref{sec:recall}), and choose a z-pair $\mf{z}_s=(H_1,\xi_{H_1})$. We have the Whittaker normalization $\Delta[\psi,\mf{e}_s,\mf{z}_s]$
of the transfer factor compatible with \cite{LS87} (see the discussion before Proposition \ref{pro:deltainv}).

By construction, $\phi$ factors through the $\xi$. Put $\phi_s=\phi\circ\xi_{H_1}$. For any function $f \in \mc{\tilde H}(G)$ let $f^{s,\psi} \in \mc{\tilde H}(H_1)$ denote the transfer of $f$ with respect to the transfer factor $\Delta[\psi,\mf{e}_s,\mf{z}_s]$. While $\phi_s$ and $f^{s,\psi}$ depend on the choice of $\mf{z}_s$, the distribution
\[ f \mapsto S\Theta_{\phi_s}(f^{s,\psi}) \]
does not. As discussed in Section \ref{sec:recall}, we have the inversion of endoscopic transfer
\[ \Theta_\pi(f) = \sum_{s \in C_\phi} \tr\rho(s) S\Theta_{\phi_s}(f^{s,\psi}). \]

Thus, we need to show that
\[ \Theta_{\pi^\vee}(f) = \sum_{s \in C_{{^LC}\circ\phi}} \tr(\rho^\vee(\hat C^{-1}(s))) S\Theta_{[{^LC}\circ\phi]_s}(f^{s,\psi^{-1}}). \]
We can of course reindex the sum as
\[ \sum_{s \in C_\phi} \tr\rho(s) S\Theta_{[{^LC}\circ\phi]_{s'}}(f^{{s'},\psi^{-1}}), \]
with $s'=\hat C(s^{-1})$. The theorem will be proved once we show
\[ S\Theta_{\phi_s}([f\circ i]^{s,\psi}) = S\Theta_{[{^LC}\circ\phi]_{s'}}(f^{{s'},\psi^{-1}}). \]
The endoscopic datum corresponding to ${^LC\circ\phi}$ and $s'$ is precisely ${^LC}(\mf{e})$. We are free to choose any z-pair for it, and we choose ${^LC^H}(\mf{z_e})$. Then $[{^LC\circ\phi}]_{s'}={^LC^H}\circ\phi_s$. The function $f^{s',\psi^{-1}} \in \mc{\tilde H}(H_1)$ is the transfer of $f$ with respect to the data ${^LC}(e)$, ${^LC^H}(\mf{z_e})$, and the Whittaker datum $(B,\psi^{-1})$. By Corollary \ref{cor:deltainv}, the function $f^{s',\psi^{-1}} \circ i$ is the transfer of $f \circ i$ with respect to $\mf{e}$,$\mf{z_e}$, and the Whittaker datum $(B,\psi)$. In other words,
\[ [f\circ i]^{s,\psi} = f^{s',\psi^{-1}} \circ i. \]
The theorem now follows from Theorem \ref{thm:stabcont}.

\end{proof}

We alert the reader that, as was explained in Section \ref{sec:recall}, the symbol $\iota_{B,\psi}(\phi,\rho)$ refers to an individual representation of $G(F)$ in all cases of Theorem \ref{thm:cont}, except possibly when $G$ is an even orthogonal $p$-adic group, in which case Arthur's classification may assign to the pair $(\phi,\rho)$ a pair of representations, rather than an individual representation. In that case, the theorem asserts that if $(\pi_1,\pi_2)$ is the pair of representations associated with $(\phi,\rho)$, then $(\pi_1^\vee,\pi_2^\vee)$ is the pair of representations associated with $\iota_{B,\psi^{-1}}({^LC}\circ\phi,[\rho\circ\hat C^{-1}]^\vee)$.

The following result is the $p$-adic analog of \cite[Thm. 7.1(a)]{AV12}.

\begin{cor} Let $G$ be a quasi-split symplectic or special orthogonal $p$-adic group, and let $\phi : W' \rw {^LG}$ be a tempered Langlands parameter. If $\Pi$ is an $L$-packet assigned to $\phi$, then
\[ \Pi^\vee=\{ \pi^\vee| \pi \in \Pi \} \]
is an $L$-packet assigned to ${^LC}\circ\phi$.
\end{cor}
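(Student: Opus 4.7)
The plan is to deduce this corollary as a direct consequence of Theorem \ref{thm:cont}. I will fix any Whittaker datum $(B,\psi)$ for $G$ and use it to parameterize the $L$-packet $\Pi$ (identified with $\Pi_\phi$) via the bijection $\iota_{B,\psi}:\tx{Irr}(C_\phi) \rw \Pi_\phi$ and the $L$-packet $\Pi_{{^LC}\circ\phi}$ via $\iota_{B,\psi^{-1}}:\tx{Irr}(C_{{^LC}\circ\phi}) \rw \Pi_{{^LC}\circ\phi}$, as recalled in Section \ref{sec:recall}.

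The first step is to check that $\hat C$ restricts to an isomorphism $S_\phi \rw S_{{^LC}\circ\phi}$ that preserves $Z(\hat G)^\Gamma$ (since $\hat C$ preserves $Z(\hat G)$ and commutes with the $\Gamma$-action), and so descends to an isomorphism $C_\phi \rw C_{{^LC}\circ\phi}$. Consequently the assignment $\rho \mapsto [\rho\circ\hat C^{-1}]^\vee$ is a bijection $\tx{Irr}(C_\phi) \rw \tx{Irr}(C_{{^LC}\circ\phi})$. Applying Theorem \ref{thm:cont} for each $\rho$ and letting $\rho$ vary over $\tx{Irr}(C_\phi)$, I obtain
\[ \Pi_\phi^\vee = \{\iota_{B,\psi}(\phi,\rho)^\vee : \rho \in \tx{Irr}(C_\phi)\} = \{\iota_{B,\psi^{-1}}({^LC}\circ\phi,[\rho\circ\hat C^{-1}]^\vee) : \rho \in \tx{Irr}(C_\phi)\} = \Pi_{{^LC}\circ\phi}, \]
the last equality holding because $[\rho\circ\hat C^{-1}]^\vee$ sweeps out all of $\tx{Irr}(C_{{^LC}\circ\phi})$ as $\rho$ varies.

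The only subtlety arises when $G$ is a $p$-adic even orthogonal group: here $(\phi,\rho)$ may correspond to a two-element $\tx{\tilde Out}_N(G)$-orbit of representations, and $\phi$ may be assigned two $L$-packets $\Pi_{\phi,1},\Pi_{\phi,2}$ interchanged by $\tx{\tilde Out}_N(G)$. In that case I would read Theorem \ref{thm:cont} at the level of these orbits (as noted in the remark following the theorem) and use that the contragredient commutes with the $\tx{\tilde Out}_N(G)$-action; the displayed computation then yields $\Pi^\vee = \Pi_{{^LC}\circ\phi,j}$ for an appropriate $j$, whichever of $\Pi_{\phi,1},\Pi_{\phi,2}$ was chosen as $\Pi$. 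Since the statement is essentially a repackaging of Theorem \ref{thm:cont}, I do not expect any genuine obstacle beyond this bookkeeping.
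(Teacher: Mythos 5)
Your reduction to Theorem \ref{thm:cont} is exactly right for $G$ symplectic or odd orthogonal, and matches the paper's treatment of that case. However, your handling of the even orthogonal case has a genuine gap that the $\tx{\tilde Out}_N(G)$-equivariance argument does not close, and it is precisely here that the paper has to do real work.

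Here is the issue. In the even orthogonal case, Theorem \ref{thm:cont} only says that the $\tx{\tilde Out}_N(G)$-orbit $\{\iota_{B,\psi,1}(\rho),\iota_{B,\psi,2}(\rho)\}$ has, after taking contragredients, the orbit $\{\iota_{B,\psi^{-1},1}(\rho'),\iota_{B,\psi^{-1},2}(\rho')\}$ with $\rho'=[\rho\circ\hat C^{-1}]^\vee$. When both $\Pi_{\phi,1}\ne\Pi_{\phi,2}$ and $\Pi_{{^LC}\circ\phi,1}\ne\Pi_{{^LC}\circ\phi,2}$, this tells you that $\iota_{B,\psi,1}(\rho)^\vee$ lands in one of $\Pi_{{^LC}\circ\phi,1}$ or $\Pi_{{^LC}\circ\phi,2}$, but not which one, and a priori the assignment could flip as $\rho$ runs over $\tx{Irr}(C_\phi)$. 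In that case $\Pi^\vee$ would be a proper mixture of $\Pi_{{^LC}\circ\phi,1}$ and $\Pi_{{^LC}\circ\phi,2}$ rather than one of them. Your appeal to ``the contragredient commutes with the $\tx{\tilde Out}_N(G)$-action'' only shows that the generator $\tau$ of $\tx{\tilde Out}_N(G)$ swaps $\Pi_1^\vee$ and $\Pi_2^\vee$; but a mixed pair such as $\{a,\tau b\}$ and $\{\tau a,b\}$ is also swapped by $\tau$, so this symmetry alone does not force the conclusion. The paper closes the gap by a separate argument at the level of stable characters: the stable character $S\Theta$ of $\Pi$ is stable on all of $\mc{H}(G)$ (not merely on $\mc{\tilde H}(G)$), and so is $S\Theta\circ i$; Theorem \ref{thm:stabcont} identifies its restriction to $\mc{\tilde H}(G)$ with the common restriction of $S\Theta_1'$ and $S\Theta_2'$, whence by \cite[Cor.\ 8.4.5]{Ar11} one has $S\Theta\circ i=\lambda S\Theta_1'+\mu S\Theta_2'$ with $\lambda+\mu=1$, and linear independence of tempered characters forces $\{\lambda,\mu\}=\{0,1\}$. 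This stable-character input is the missing ingredient in your proposal.
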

This corollary to Theorem \ref{thm:cont} is the $p$-adic analog of \cite[Thm. 1.3]{AV12}.
\begin{proof}
When $G$ is either a symplectic or an odd orthogonal group, the statement follows immediately from Theorem \ref{thm:cont}. However, if $G$ is an even orthogonal group, $\Pi$ is one of two $L$-packets $\Pi_1$, $\Pi_2$ assigned to $\phi$, and a priori we only know that the set $\Pi^\vee$ belongs to the union of the two $L$-packets $\Pi_1',\Pi_2'$ assigned to ${^LC}\circ\phi$. We claim that in fact it equals one of these two $L$-packets. Indeed, let $S\Theta$ be the stable character of $\Pi$. This is now a stable linear form on $\mc{H}(G)$, not just on $\mc{\tilde H}(G)$. The linear form $S\Theta\circ i$ is still stable. If $S\Theta_1'$ and $S\Theta_2'$ are the stable characters of $\Pi_1'$ and $\Pi_2'$ respectively, then the restrictions of $S\Theta_1'$ and $S\Theta_2'$ to $\mc{\tilde H}(G)$ are equal, and moreover according to Theorem \ref{thm:stabcont} these restrictions are also equal to the restriction of $S\Theta \circ i$ to $\mc{\tilde H}(G)$. From \cite[Cor. 8.4.5]{Ar11} we conclude that
\[ S\Theta \circ i = \lambda S\Theta_1' + \mu S\Theta_2' \]
for some $\lambda,\mu \in \C$ with $\lambda+\mu=1$. However, each of the three distributions $S\Theta\circ i, S\Theta_1',S\Theta_2'$ is itself a sum of characters of tempered representations. The linear independence of these characters then forces one of the numbers $\lambda,\mu$ to be equal to $1$, and the other to $0$.
\end{proof}

\section{$p$-adic $L$-packets of zero or minimal positive depth} \label{sec:explicit}
In this section we are going to show that the constructions of depth-zero supercuspidal $L$-packets of \cite{DR09} and of epipelagic $L$-packets of \cite{Ka12} satisfy Equation \eqref{eq:expcont}.

Fix a Langlands parameter $\phi : W \rw {^LG}$ which is either TRSELP or epipelagic. We may fix a $\Gamma$-invariant splitting $(\hat T,\hat B,\{X_{\hat\alpha}\})$ of $\hat G$ and arrange that $\hat T$ is the unique torus normalized by $\phi$. We also choose a representative for $\hat C$ which sends the chosen splitting to its opposite and form ${^LC}=\hat C \times \tx{id}_W$. In both constructions the first step is to form the $\Gamma$-module $\hat S$ with underlying abelian group $\hat T$ and $\Gamma$-action given by the composition $\Gamma \rw \Omega(\hat T,\hat G)\rtimes\Gamma$ of $\phi$ and the natural projection $N(\hat T,\hat G) \rw \Omega(\hat T,\hat G)$. Since ${^LC}$ commutes with every element of $\Omega(\hat T,\hat G)\rtimes\Gamma$, the $\Gamma$-module $\hat S$ for ${^LC}\circ\phi$ is the same as the one for $\phi$. The next step is to obtain from $\phi$ a character $\chi : S(F) \rw \C^\times$ by factoring $\phi$ through an $L$-embedding ${^Lj_X}: {^LS}\rw{^LG}$. For the depth-zero case, this is the reinterpretation given in \cite{Ka11}, and the $L$-embedding ${^LS}\rw{^LG}$ is obtained by choosing arbitrary unramified $\chi$-data $X$ for $R(\hat S,\hat G)$. Thus, this $L$-embedding is the same for $\phi$ and ${^LC}\circ\phi$. Writing $\phi_S : W\rw {^LS}$ for the factored parameter, so that $\phi={^Lj_X}\circ\phi_S$, we see from Lemma \ref{lem:chiinv} that
\[ {^LC}\circ\phi = {^Lj_{-X}}\circ (-1) \circ\phi_S. \]
Since $-X$ is another set of unramified $\chi$-data, and it is shown in \cite[\S3.4]{Ka11} that $\phi_S$ is independent of the choice of $X$, we see that $[{^LC}\circ\phi]_S=(-1)\circ\phi_S$. In other words, the character of $S(F)$ constructed from ${^LC}\circ\phi$ is $\chi_S^{-1}$.

We claim that the same is true for epipelagic Langlands parameters. That case is a bit more subtle because ${^Lj_X}$ depends on $\phi$ more strongly -- the $\chi$-data $X$ is chosen based on the restriction of $\phi$ to wild inertia. What we need to show is that if $X$ is chosen for $\phi$, then the choice for ${^LC}\circ\phi$ is $-X$. This however follows right away from the fact that the restriction of ${^LC}\circ\phi$ to wild inertia equals the composition of $(-1)$ with the restriction of $\phi$ to wild inertia.

The third step in the construction of these two kinds of $L$-packets relies on a procedure (different in the two cases) which associates to an admissible embedding $j$ of $S$ into an inner form $G'$ of $G$ a representation $\pi(\chi_S,j)$ of $G'(F)$. We won't recall this procedure -- for our current purposes it will be enough to treat it as a black box. Now let $(B,\psi)$ be a Whittaker datum for $G$. In the depth-zero case, we choose a hyperspecial vertex in the apartment of some maximal torus contained in $B$ so that $\psi$ reduces to a generic character of $B_u(k_F)$. It is shown in both cases that there exists an admissible embedding $j_0 : S \rw G$ so that the representation $\pi(\chi_S,j_0)$ of $G(F)$ is $(B,\psi)$-generic. Moreover, one has $S_\phi = [\hat S]^\Gamma$, so that $\tx{Irr}(S_\phi) = X^*(\hat S^\Gamma) = X_*(S)_\Gamma=\tb{B}(S)$, where $\tb{B}(S)$ is the set of isomorphism classes of isocrystals with $S$-structure \cite{Ko97}. Using $j_0$ one obtains a map $\tx{Irr}(S_\phi) =\tb{B}(S) \rw \tb{B}(G)_\tx{bas}$. Each $\rho \in \tx{Irr}(S_\phi)$ provides in this way an extended pure inner twist $(G^{b_\rho},b_\rho,\xi_\rho)$. The composition $j_\rho = \xi_\rho\circ j_0$ is an admissible embedding $S \rw G^{b_\rho}$ defined over $F$ and provides by the black box construction alluded to above a representation $\pi(\chi_S,j_\rho)$ of $G^{b_\rho}(F)$. Thus we have
\[ \iota_{B,\psi} : \tx{Irr}(S_\phi) \rw \Pi_\phi,\qquad \rho \mapsto \pi(\chi_S,j_\rho). \]
It is known that for any $j$ the contragredient of $\pi(\chi_S,j)$ is given by $\pi(\chi_S^{-1},j)$. In particular, the contragredient of $\pi(\chi_S,j_0)$ is given by $\pi(\chi_S^{-1},j_0)$. The latter representation is $(B,\psi^{-1})$-generic. Hence, the version of $j_0$ associated to ${^LC}\circ\phi$ and the Whittaker datum $(B,\psi^{-1})$ is equal to $j_0$. We have $S_{^LC\circ\phi}=[\hat S^\Gamma]$, and reviewing the above procedure one sees that the map $\iota_{B,\psi^{-1}}$ corresponding to $^LC\circ\phi$ assigns to each $\rho \in X^*(\hat S^\Gamma)$ the representation $\pi(\chi_S^{-1},j_\rho)$ which is indeed the contragredient of $\pi(\chi_S,j_\rho)$.

Tasho Kaletha\qquad \textsf{tkaletha@math.princeton.edu}\\
Deptartment of Mathematics, Princeton University, Fine Hall, Washington Road, Princeton, NJ 08540

\end{document}